\documentclass[review,hidelinks,onefignum,onetabnum]{siamart220329}



\usepackage{lipsum}
\usepackage{amsfonts}
\usepackage{graphicx}
\usepackage{epstopdf}
\usepackage{algorithmic}
\ifpdf
  \DeclareGraphicsExtensions{.eps,.pdf,.png,.jpg}
\else
  \DeclareGraphicsExtensions{.eps}
\fi


\newsiamremark{remark}{Remark}
\newsiamremark{hypothesis}{Hypothesis}
\crefname{hypothesis}{Hypothesis}{Hypotheses}
\newsiamthm{claim}{Claim}

\headers{}{H. Zhao and D. M. Tartakovsky}

\title{Model discovery for nonautonomous 
translation-invariant problems\thanks{\textbf{Funding:} {This work was funded in part by the Air Force Office of Scientific Research under grant FA9550-21-1-0381, by the National Science Foundation under award 2100927, by the Office of Advanced Scientific Computing Research (ASCR) within the Department of Energy Office of Science under award number DE-SC0023163, and by the Strategic Environmental Research and Development Program (SERDP)  of the Department of Defense under award RC22-3278.}}}

\author{Hongli Zhao\thanks{Department of Statistics, The University of Chicago, Chicago, IL
  (\email{honglizhaobob@uchicago.edu}).}
\and Daniel M. Tartakovsky\thanks{Department of Energy Science and Engineering, Stanford University, Stanford, CA 
  (\email{tartakovsky@stanford.edu}).
  } 
}

\usepackage{amsopn}


\usepackage{mathtools}

\newcommand{\pphi}{\boldsymbol{\Phi}}

\newcommand{\ssigma}{\boldsymbol{\Sigma}}
\newcommand{\norm}[1]{\left\lVert#1\right\rVert}
\newcommand{\normtwo}[1]{\left\lVert#1\right\rVert}

\ifpdf
\hypersetup{
  pdftitle={An Example Article},
  pdfauthor={D. Doe, P. T. Frank, and J. E. Smith}
}
\fi




\begin{document}

\maketitle

\begin{abstract}
Discovery of mathematical descriptors of physical phenomena from observational and simulated data, as opposed to from the first principles, is a rapidly evolving research area. Two factors, time-dependence of the inputs and hidden translation invariance, are known to complicate this task. To ameliorate these challenges, we combine Lagrangian dynamic mode decomposition with a locally time-invariant approximation of the Koopman operator. The former component of our method yields the best linear estimator of the system's dynamics, while the latter deals with the system's nonlinearity and non-autonomous behavior. We provide theoretical estimators (bounds) of prediction accuracy and perturbation error to guide the selection of both rank truncation and temporal discretization. We demonstrate the performance of our approach on several non-autonomous problems, including two-dimensional Navier-Stokes equations. 
\end{abstract}

\begin{keywords}
Dynamic mode decomposition, reduced-order model, advection-diffusion, Lagrangian framework, time-dependent coefficient
\end{keywords}

\begin{MSCcodes}
35K57, 37C60
\end{MSCcodes}

\section{Introduction} With the advent of machine learning applications in the engineering sciences, the need for pattern recognition and predictions has become increasingly pronounced in order to assist the study of temporally evolving natural phenomena~\cite{naturescientificai}. Direct-solution methods, which often relies on deep neural networks (DNN) to encode an input-output relationship, are hindered by the high requirement on both quantity and quality of data and thus sensitive to parametric changes of the underlying system~\cite{10.1145/3567591}. On the other hand, equation discovery~\cite{Todorovski2010} supplements partial knowledge with optimal predictions/parameter inference to reproduce the governing laws. Well-known methods belonging to this class include symbolic regression~\cite{doi:10.1126/sciadv.aay2631}, numerical Gaussian processes~\cite{Raissi_2017,raissi2017numerical}, sparse identification of nonlinear dynamics (SINDy)~\cite{doi:10.1073/pnas.1517384113}, physics-informed neural networks (PINN)~\cite{physicsinformednndiscovery} and Kalman filters~\cite{Chui2009}, along with combinations of these strategies to accommodate different physical scenarios or achieve computational improvements~\cite{KIYANI2023116258,doi:10.1137/21M1434477,Kaheman2020,doi:10.1137/21M1458880}. 

In the context of system identification with complete absence of physics, equation-free methods are adopted to reconstruct the observed processes through a purely data-driven surrogate. Instead of prescribing a set of dictionary terms, equation-free methods seek to approximate the flow map/operator that incorporates differential information. Deep neural networks (DNN) and dynamic mode decompositions (DMD) are two prominent classes of methods for operator learning. Including the well-known DeepONet~\cite{Lu_2021}, DNN architectures possess high expressiveness and are capable of serving as nonlinear surrogates of PDE-based models to arbitrary accuracy given sufficient training samples~\cite{Qin_2019,CHEN2022110782}. On the other hand, DMD provides an optimal linear approximation of the model and relies on the Koopman operator to account for nonlinearity~\cite[and references therein]{Kutz2016book,Mezic2021}. In the absence of precise error estimators for DNN surrogates, their performance on any given problem cannot be guaranteed \emph{a priori}. In contrast, being a linear surrogate, DMD is better understood and equipped with theoretical estimates of prediction accuracy, e.g.,~\cite{lu2019predictive}. 
Its various generalizations are designed to handle advection-dominated phenomena~\cite{Lu_2020}, shock waves and discontinuous solutions~\cite{lu2020dynamic}, inhomogeneity of differential operators~\cite{Lu_2021} and a problem's parametric dependence~\cite{lu-2023-drips}.

Physical constraints in the PDE model, such as translation invariance and time-dependent coefficients, pose challenges for both DNNs and DMD. For instance, direct-solution DNNs using soft regularization to enforce advection and mass conservation may lead to ill-conditioning during training~\cite{krishnapriyan2021characterizing}. Operator DNNs have also been observed to yield poor performance when the finite data samples are not representative of global transport phenomena~\cite{Zhu_2023,xu2021how}. Likewise, standard DMD is also not devoid of shortcomings and fails to cope with sharp gradients  \cite{doi:10.1137/130932715,Lu_2020}. Furthermore, its construction is based on the assumption of time homogeneity (e.g., parameters and/or source terms do not vary in time), which is not suitable for nonautonomous problems. 
 
A prime example of the twin challenges to model discovery is advection-diffusion problems which encapsulate conservation of momentum~\cite{SHENG202060,Temam2001}, thermal energy~\cite{transport-phenomena-cite}, and probability mass~\cite{Risken1996}. 
%
%
In the diffusion-dominated and intermediary regimes, these problems have been successfully treated via standard reduced-order basis methods including DMD~\cite{lu2019predictive, Nakao2020} and POD~\cite{GUO201975}.
The advection-dominated regime, characterized by, e.g., high P\'eclet and Reynolds numbers, complicates not only numerical solution of advection-diffusion equations but also discovery of these equations (or corresponding equation-free models) from observations.
Although its convergence properties has been well-studied~\cite{mezicdmdconvergence}, standard DMD yields quantitatively and qualitatively incorrect solutions, spurring the development of Lagrangian DMD \cite{Lu_2020}.


Reduced-order surrogate models of nonautonomous dynamical systems require either an appropriate global spatio-temporal basis or a time-dependent parameterization (e.g. via Fourier spectral expansion)~\cite{mezic-spectral,MEZIC2016690}. Examples of such modifications of the standard DMD include streaming DMD~\cite{Hemati_2014}, time-varying DMD~\cite{zhang2017online}, and more generally, nonautonomous Koopman operators for (quasi)periodic time-dependent inputs~\cite{Mezic2016}. We build upon these developments to construct a DMD framework for translation-invariant (e.g., advection-dominated) problems with time-dependent inputs. Our approach is to reformulate a governing partial differential equation in the Lagrangian frame of reference and to deploy a piece-wise constant approximation of the nonautonomous Koopman operator in the resulting Lagrangian DMD~\cite{Lu_2020}. 

In section~\ref{sec:problem-formulation}, we formulate a class of parabolic translation-invariant PDEs with time-dependent inputs and, upon spatial discretization, express them as a nonautonomous dynamical system. Section~\ref{sec:review-dmd} contains a brief description of the Koopman operator theory and the DMD framework for construction of reduced-order representations of PDE-based models. In section~\ref{sec:online-lagrangian-dmd}, we present a local Lagrangian DMD, whose implementation shares relevant features of the time-varying DMD~\cite{zhang2017online} and the Lagrangian DMD~\cite{Lu_2020} to effectively represent translation-invariant PDEs with time-dependent inputs. Upper bounds of both the prediction error of our method and the operator norm error are derived in section~\ref{sec:theory}, as functions of reduction of rank and number of collected snapshots. This theoretical analysis demonstrates that the local Lagrangian DMD is more accurate than either time-varying DMD or Lagrangian DMD alone. A series of numerical experiments, reported in section~\ref{sec:numerical-examples}, serve to demonstrate our approach and to verify the tightness of these error bounds. Main conclusions drawn from our study are summarized in section~\ref{sec:conclusions}, accompanied by a discussion of the method's limitations and future research.

\section{Problem Formulation} \label{sec:problem-formulation} We are concerned with the following class of partial differential equations (PDE) with variable coefficients for a quantity of interest $u(t,\mathbf{x})$, with $\mathbf{x}\in\Omega\subset \mathbb{R}^{d}$:
\[
    \frac{\partial u}{\partial t} + \nabla_{\mathbf{x}}\cdot(G(t, \mathbf{x}, u)u) = \nabla_{\mathbf{x}}\cdot(D(t,\mathbf{x}, u)\nabla_{\mathbf{x}}u), (t, \mathbf{x}) \in (0, t_f] \times \Omega 
\]
\begin{equation}\label{eqn:variable-advection-diffusion}
    u(t_0, \mathbf{x}) = u_0(\mathbf{x})
\end{equation}

We consider a semi-discrete method to simulate equation (\ref{eqn:variable-advection-diffusion}) by discretizing in the spatial variables $\mathbf{x}$. For simplicity, we assume the number of gird points is $n$ for each of the $d$ spatial dimensions. We arrive at a nonautonomous dynamical system of general form:
\[
    \frac{d\mathbf{u}}{dt} = \mathbf{N}(t, \mathbf{u})
\]
\begin{equation}\label{eqn:general-nonautonomous-system}
    \mathbf{u}(0) = \mathbf{u}_0
\end{equation} whose right-hand side describes the dynamics of the PDE in (\ref{eqn:variable-advection-diffusion}) with an explicit time-dependence. With respect to construction of ROMs, we will be primarily concerned with the discretized equations (\ref{eqn:general-nonautonomous-system}). Let $\mathbf{u}\in\mathcal{M}\subset \mathbb{R}^{n^d}$ denote the numerical solution, and $\mathbf{N}:\mathbb{R}^{+}\times \mathbb{R}^{n^d}\rightarrow \mathbb{R}^{n^d}$ is the discretized PDE operator. 

Let the temporal domain $[0, t_f]$ be discretized uniformly with step size $\Delta t$, and define $t_i = i\Delta t$, for $0 = t_0 < t_1 < \cdots < t_{m} = t_f$. Furthermore, define $\boldsymbol{\Phi}_{\Delta t}(\cdot ; t_i): \mathbb{R}^n\rightarrow\mathbb{R}^n$ as the discrete flow map associated with the system (\ref{eqn:general-nonautonomous-system}), and similarly the continuous flow map is denoted as $\Phi_t(\cdot; s)$, such that for any $t\le t'$:
\begin{equation}\label{eqn:general-nonautonomous-time-shift-continuous}
    \mathbf{u}(t') = \Phi_{t'}(\mathbf{u}(t); t) := \mathbf{u}(t) + \int_{t}^{t'}\mathbf{N}(s, \mathbf{u}(s))ds
\end{equation} Furthermore,
\begin{equation}\label{eqn:general-nonautonomous-time-shift}
    \mathbf{u}_{i+1} = \boldsymbol{\Phi}_{\Delta t}(\mathbf{u}_i; t_i) := \mathbf{u}(t_i) + \int_{t_i}^{t_{i+1}}\mathbf{N}(s, \mathbf{u}(s))ds
\end{equation} where we define $\mathbf{u}_i = \mathbf{u}(t_i)$. 

\section{Review of DMD Algorithms} \label{sec:review-dmd} For the general dynamical system (\ref{eqn:general-nonautonomous-time-shift}), the associated family of Koopman operators evolve a set of observables along its flow. More precisely, given an observable function $g: \mathbb{R}^n\rightarrow\mathbb{R}^N$, the Koopman operator $\mathcal{K}_t^{t'}$ is defined such that:
\begin{equation}
    \mathcal{K}_t^{t'}g(\mathbf{u}(t)) := g(\mathbf{u}(t'))
\end{equation} For the discrete-time description (\ref{eqn:general-nonautonomous-time-shift}), we similarly define the associated Koopman operator $\mathcal{K}^{\Delta t}_i$, such that:
\begin{equation}\label{eqn:deiscrete-koopman-definition}
    \mathcal{K}_i^{\Delta t}g(\mathbf{u}_i) = g(\mathbf{u}_{i+1})
\end{equation} Both $\mathcal{K}_t^{t'}, \mathcal{K}_i^{\Delta t}$ are infinite-dimensional operators on the Hilbert space of all observable functions $g$. In addition, they are linear maps despite potential nonlinearity of the original system.

Dynamic mode decomposition (DMD) is a celebrated algorithm that attempts to approximate the eigenmodes of the Koopman operator to identify dominant frequencies and reconstruct the underlying dynamics from discrete observations. Let a training dataset containing $m$ collected snapshots be denoted as $\mathcal{S} = \{(\mathbf{g}_i,\mathbf{g}_{i+1}\})\}_{i=1}^m$, with $\mathbf{g}_i = g(\mathbf{u}_i)$. In line with (\ref{eqn:deiscrete-koopman-definition}), we would like to construct a best-fit linear operator $\mathbf{K}$ such that:
\begin{equation}
    \mathbf{g}_{i+1} \approx \mathbf{K}\mathbf{g}_i
\end{equation} for all $i=1,2,\ldots, m$.

\subsection{Standard DMD} The standard DMD algorithm attempts to reconstruct directly in solution space, i.e. $g(\mathbf{u}_i) = \mathbf{u}_i$ and $\mathbf{K}$ is constructed via minimizing the mean squared error (MSE):
\begin{equation}\label{eqn:standard-mse-loss-function}
    L_{\mathcal{S}}(\mathbf{K}) = \frac1m\sum_{i=1}^m\lvert\lvert \mathbf{y}_i - \mathbf{K}\mathbf{x}_i\rvert\rvert_{2}^2
\end{equation} where the pairs $(\mathbf{x}_i, \mathbf{y}_i) = (\mathbf{u}_{i}, \mathbf{u}_{i+1})$ form the data matrices of size $n \times m$:
\begin{equation} \label{eqn:data-matrices}
    \mathbf{X} = 
    \begin{bmatrix}
        \vline & \vline & \cdots & \vline \\
        \mathbf{u}_1 & \mathbf{u}_2 & \cdots & \mathbf{u}_m \\
        \vline & \vline & \cdots & \vline
    \end{bmatrix}, 
    \mathbf{Y} = 
    \begin{bmatrix}
        \vline & \vline & \cdots & \vline \\
        \mathbf{u}_2 & \mathbf{u}_3 & \cdots & \mathbf{u}_{m+1} \\
        \vline & \vline & \cdots & \vline
    \end{bmatrix} 
\end{equation}

The minimizer of (\ref{eqn:standard-mse-loss-function}) can be explicitly derived as:
\begin{equation}\label{eqn:standard-dmd-solution}
    \mathbf{K} = \mathbf{Y}\mathbf{X}^{\dagger}
\end{equation} where $\dagger$ denotes the Moore-Penrose pseudoinverse, $\mathbf{X}^{\dagger} = (\mathbf{X}^T\mathbf{X})^{-1}\mathbf{X}^*$. In order to compute $\mathbf{X}^{\dagger}$ stably and tractably, a truncated singular value decomposition (SVD) is often applied on the data matrix $\mathbf{X}$:
\begin{equation}
    \mathbf{X} \approx \mathbf{U}_r\boldsymbol{\Sigma}_r\mathbf{V}_r^*
\end{equation} where the subscript $r$ denotes a pre-specfied rank typically determined on a Frobenius-norm error threshold, with $\mathbf{U}_r\in\mathbb{R}^{n\times r}, \mathbf{V}_r\in \mathbb{R}^{m\times r}, \boldsymbol{\Sigma}_r\in\mathbb{R}^{r\times r}$ is a diagonal matrix containing the singular values $\sigma_1\ge \sigma_2\ge \cdots\ge \sigma_r$ of $\mathbf{X}$, in non-increasing order. Furthermore, the columns of $\mathbf{U}_r$ span a $r$ dimensional subspace of $\mathbb{R}^n$, making it a computationally efficient strategy to first project the observations, compute predictions on the lower-dimensional space, and transform back to the original state space \cite{doi:10.1137/1.9781611974508.ch1}. The procedure is summarized in Algorithm~\ref{alg:standard-dmd-algorithm}, which provides a continuous and fully data-driven model satisfying (\ref{eqn:standard-mse-loss-function}). The standard DMD algorithm serves as the foundation of a wide range of DMD algorithms that incorporate additional control parameters \cite{proctor2014dynamic, LU2021110550}.

\begin{algorithm}
\caption{Standard DMD Algorithm \cite{doi:10.1137/1.9781611974508.ch3}}
\label{alg:standard-dmd-algorithm}
\begin{algorithmic}
\STATE{Inputs: Data matrices $\mathbf{X},\mathbf{Y}$, containing $m$ snapshots of system states of dimension $n$. SVD truncation error threshold $\epsilon>0$:
\begin{equation}
    \mathbf{X} = 
    \begin{bmatrix}
        \vline & \vline & \cdots & \vline \\
        \mathbf{u}_1 & \mathbf{u}_2 & \cdots & \mathbf{u}_m \\
        \vline & \vline & \cdots & \vline
    \end{bmatrix}, 
    \mathbf{Y} = 
    \begin{bmatrix}
        
        \vline & \vline & \cdots & \vline \\
        \mathbf{u}_2 & \mathbf{u}_3 & \cdots & \mathbf{u}_{m+1} \\
        \vline & \vline & \cdots & \vline
    \end{bmatrix}
\end{equation}
} 

1. Compute SVD of $\mathbf{X}$ with truncation rank $r$:
\begin{equation}
    \mathbf{X} \approx \mathbf{U}_r\boldsymbol{\Sigma}_r\mathbf{V}^*_r
\end{equation} such that:
\begin{equation}
    \norm{\mathbf{X} - \mathbf{U}_r\boldsymbol{\Sigma}_r\mathbf{V}_r^*}_F^2 = 
    \frac{\sum_{k=m+1}^n\sigma_k^2}{\sum_{k=1}^{n}\sigma_k^2} < \epsilon
\end{equation}

2. Compute low-rank approximation to Koopman operator:
\begin{equation}\label{eqn:low-rank-koopman}
    \widehat{\mathbf{K}} = \mathbf{U}_r^*\mathbf{Y}\mathbf{V}_r\boldsymbol{\Sigma}_r^{-1}
\end{equation}

3. Compute eigendecomposition of $\widehat{\mathbf{K}}$:
\begin{equation}
    \widehat{\mathbf{K}} = \mathbf{Q}\boldsymbol{\Lambda}\mathbf{Q}^{-1}
\end{equation} where $\boldsymbol{\Lambda}$ is an $r\times r$ diagonal matrix containing the eigenvalues of $\widehat{\mathbf{K}}$.

4. Obtain the projected DMD modes:
\begin{equation}
    \boldsymbol{\Phi} = \mathbf{U}_r\mathbf{Q}
\end{equation} associated with eigenvalues $\lambda_k = \boldsymbol{\Lambda}_{kk}$.

5. Compute DMD predictions:
\begin{equation}
    \mathbf{u}_{\text{DMD}}(t) = \boldsymbol{\Phi}e^{t\boldsymbol{\Omega}}\boldsymbol{\Phi}^{\dagger}\mathbf{x}_0
\end{equation} where $\boldsymbol{\Omega}$ is a diagonal matrix of DMD frequencies with entries $\omega_{k} = \ln \lambda_{k}/{\Delta t}$. In particular, $\mathbf{u}_{\text{DMD}}(t_k) = \boldsymbol{\Phi}\boldsymbol{\Lambda}^k\boldsymbol{\Phi}^{\dagger}\mathbf{x}_0$ at temporal grid point $k$.
\end{algorithmic} 
\end{algorithm}

\subsection{Physics-Aware DMD} \label{sec:lagrangian-dmd-section} To account for fundamental physical constraints for problems describing conservative advection (i.e. non-negativity of solutions, mass conservation), reduced-order models in a Lagrangian frame of reference are first discussed in \cite{mojgani2017lagrangian} based on principal orthogonal decomposition (POD). In the data-driven Koopman operator formulation, the physics-aware DMD (or Lagrangian DMD) was developed in \cite{Lu_2020} for advection-dominated phenomena, where standard DMD fails. The main idea is to include the moving Lagrangian grid as observables in addition to a high-fidelity numerical solution. More explicitly, we consider the PDE (\ref{eqn:variable-advection-diffusion}) along the characteristic lines:
\begin{equation}\label{eqn:lagrangian-pde}
    \begin{dcases}
        \frac{d\mathcal{X}(t)}{dt} = G(t, \mathcal{X}(t), \tilde{u}(t, \mathcal{X}(t))) \\
        \frac{du(t, x)}{dt}\bigg|_{x = \mathcal{X}(t)} = \big[
            \nabla_{\mathbf{x}}\big(
                D(t, x, {u}(t,x)
        \big)\nabla_{\mathbf{x}}{u}(t,x)
        \big]\bigg|_{x=\mathcal{X}(t)}
    \end{dcases}
\end{equation} with initial conditions:
\begin{equation}
    \begin{dcases}
        \mathcal{X}_i(0) = \mathbf{x}_i \in \mathbb{R}^d\\
        {u}(0, \mathcal{X}_i(0)) = u_0(\mathcal{X}_i)
    \end{dcases}
\end{equation} where $\mathcal{X}_i$ denotes the $i$th point in the Lagrangian moving grid at which the solution to (\ref{eqn:variable-advection-diffusion}) is evaluated, denoted as $\tilde{u}(t, \mathcal{X}(t))$. The starting grid is assumed to be the same spatial discretization as that of (\ref{eqn:general-nonautonomous-system}). In particular, $\tilde{u}(t,\mathcal{X}(t))$ differs from the solution $u(t,x)$ of (\ref{eqn:general-nonautonomous-system}), which is in the Eulerian frame of reference. The solution on the Lagrangian grid can be interpolated to the Eularian grid, and vice versa \cite{Lu_2020}.

After discretizing (\ref{eqn:lagrangian-pde}), the Lagrangian system (\ref{eqn:lagrangian-pde}) yields a dynamical system of general form (\ref{eqn:general-nonautonomous-system}) with state variables:
\begin{equation}\label{eqn:l-dmd-states}
    \mathbf{w}(t) = 
    \begin{bmatrix}
        \boldsymbol{\mathcal{X}}(t) \\
        {\mathbf{u}}(t)
    \end{bmatrix} \in \mathbb{R}^N
\end{equation} where the effective state dimension $N = dn+n^d$, including the discretized solution $u(\mathbf{x}_i)$ at each spatial grid points and re-ordered into a vector, along with a one-dimensional grid for each of the $d$ spatial dimensions. The physics-aware DMD then considers the observables defined by $g(\mathbf{u}_i) = \mathbf{w}_i$, and the associated data matrices are:
\begin{equation} \label{eqn:ldmd-data-matrices}
    \mathbf{X} = 
    \begin{bmatrix}
        \vline & \vline & \cdots & \vline \\
        \mathbf{w}_1 & \mathbf{w}_2 & \cdots & \mathbf{w}_m \\
        \vline & \vline & \cdots & \vline
    \end{bmatrix}, 
    \mathbf{Y} = 
    \begin{bmatrix}
        \vline & \vline & \cdots & \vline \\
        \mathbf{w}_2 & \mathbf{w}_3 & \cdots & \mathbf{w}_{m+1} \\
        \vline & \vline & \cdots & \vline
    \end{bmatrix}
\end{equation}

\begin{remark}
    The formulation of state vector $\mathbf{w}(t)$ in (\ref{eqn:l-dmd-states}) suffers from the so-called curse of dimensionality as the PDE solution is defined on a $d$-dimensional spatial grid. Furthermore, the interpolation from $\tilde{u}(t,\mathbf{x})$ to $u(t,\mathcal{X}(t))$ requires the formation of meshgrids at each time step $t$. As observed in \cite{Lu_2020}, the Lagrangian DMD for advection-dominated phenomena is restricted to the use of low-dimensional problems. Although possible model order reduction techniques exist, such as using tensor-network based methods \cite{richter2021solving,chen2023combining}, the discussion of high-dimensional PDE solutions is out of the scope of this paper.
\end{remark}

\subsection{Time-Varying DMD} \label{sec:define-online-dmd} The time-varying DMD algorithm divides the temporal domain $[0,t_f]$ into $p$ sub-intervals, $[t_0, t_1], \ldots, [t_{p-1}, t_p]$, with $t_0=0, t_p=t_f$. For simplicity, we assume each sub-interval contains $r$ snapshots and $m = pr$. The time-varying DMD model introduces a time dependence to the linear operator, such that:
\begin{equation}
    \mathbf{g}_{i+1} \approx \mathbf{K}(t_{i})\mathbf{g}_i
\end{equation} which approximates the nonautonomous Koopman operator (\ref{eqn:deiscrete-koopman-definition}). A common construction of $\mathbf{K}(t)$ is piecewise constant in time, considered in this work, via solving $p$ minimization problems:
\begin{equation}\label{eqn:time-varying-dmd-minimization}
    \min_{\mathbf{K}_1,\ldots,\mathbf{K}_p}L_{\mathcal{S}}(\mathbf{K}(t)) = \min_{\mathbf{K}_1,\ldots,\mathbf{K}_p}\sum_{i=1}^pL_{\mathcal{S}_i}(\mathbf{K}_i)
\end{equation} with $\mathcal{S}_i$ being the snapshots collected from $[t_{i-1},t_i]$, and $\mathcal{S} = \bigcup_{i=1}^p\mathcal{S}_i$. The linear operator $\mathbf{K}^{(i)}$ can be interpreted as a local best-fit given by a standard DMD procedure on interval $[t_{i-1}, t_i]$.

\begin{equation}\label{eqn:time-varying-dmd-solution}
    \mathbf{K}(t) = \sum_{i=1}^p\mathbf{K}^{(i)}\delta_{[t_{i-1},t_i]}(t)
\end{equation} where $\delta_{[t_{i-1},t_i]}$ is the indicator function for time interval $[t_{i-1},t_i]$. It is also possible to construct other parameterized models of $\mathbf{K}(t)$, such as basis polynomials or a universal function approximator \cite{doi:10.1137/20M1342859}.

\section{Proposed Methodology} \label{sec:online-lagrangian-dmd} Both the standard DMD model and the physics-aware DMD model assume the underlying dynamical system (\ref{eqn:general-nonautonomous-system}) is autonomous or periodic, such that the Koopman operator (\ref{eqn:deiscrete-koopman-definition}) may be captured on a time-invariant manifold given sufficient observations. Furthermore, the standard DMD algorithm tends to perform poorly on phenomena with advective mass and sharp gradients due to oscillatory DMD modes \cite{Lu_2020}. Although the physics-aware DMD is sufficient for prediction of spatially-dependent advection phenomena, the inherent assumption of time homogeneity gives rise to model misspecification and degradation of accuracy for time-dependent advection problems (\ref{eqn:variable-advection-diffusion}). To address the inaccuracies introduced by both standard DMD and physics-aware DMD, we consider the following procedure, summarized in Algorithm~\ref{alg:time-dependent-lagrangian-dmd}, which effectively introduces a time-dependence to the Lagrangian reduced order model.

\begin{algorithm}
\caption{Time-varying physics-aware DMD}
\label{alg:time-dependent-lagrangian-dmd}
\begin{algorithmic}
\STATE{Inputs: Data matrices $\mathbf{X},\mathbf{Y}$, containing $m$ snapshots of enlarged states of dimension $N$. SVD truncation error threshold $\epsilon>0$:
\begin{equation}
    \mathbf{X} = 
    \begin{bmatrix}
        \vline & \vline & \cdots & \vline \\
        \mathbf{g}_1 & \mathbf{g}_2 & \cdots & \mathbf{g}_m \\
        \vline & \vline & \cdots & \vline
    \end{bmatrix}, 
    \mathbf{Y} = 
    \begin{bmatrix}
        
        \vline & \vline & \cdots & \vline \\
        \mathbf{g}_2 & \mathbf{g}_3 & \cdots & \mathbf{g}_{m+1} \\
        \vline & \vline & \cdots & \vline
    \end{bmatrix}
\end{equation}
} where $\mathbf{g}_i = g(\mathbf{u}_i) = \mathbf{w}_i$ are the concatenation of PDE solution and Lagrangian grid, as defined in equation (\ref{eqn:l-dmd-states}).

1. For $k = 1, 2, \ldots, p$, compute low-rank approximation to Koopman operator for interval $[t_{i-1}, t_i]$ by applying the standard DMD algorithm in Algorithm~\ref{alg:standard-dmd-algorithm} to the data matrices:
\[
    \mathbf{X}_i = 
    \begin{bmatrix}
        \vline & \vline & \cdots & \vline \\
        \mathbf{g}_{(i-1)r+1} & \mathbf{g}_{(i-1)r+2} & \cdots & \mathbf{g}_{ir} \\
        \vline & \vline & \cdots & \vline
    \end{bmatrix}
\]
\[
    \mathbf{Y}_i =
    \begin{bmatrix}
        
        \vline & \vline & \cdots & \vline \\
        \mathbf{g}_{(i-1)r+2} & \mathbf{g}_{(i-1)r+3} & \cdots & \mathbf{g}_{ir+1} \\
        \vline & \vline & \cdots & \vline
    \end{bmatrix}
\] 

2. Obtain DMD modes:
\begin{equation}
    \boldsymbol{\Phi}^{(i)} = \mathbf{U}_r^{(i)}\mathbf{Q}^{(i)}
\end{equation} where $\mathbf{U}^{(i)}$ contains the left singular vectors of data matrix $\mathbf{X}_i$, and $\mathbf{Q}^{(i)}$ contains the eigenvetors of operator $\widehat{\mathbf{K}}^{(i)}$, analogous to (\ref{eqn:low-rank-koopman}) of Algorithm~\ref{alg:standard-dmd-algorithm}, along with eigenvalues: $\boldsymbol{\Lambda}^{(1)}, \boldsymbol{\Lambda}^{(2)}, \ldots, \boldsymbol{\Lambda}^{(p)}$.

3. Obtain prediction for $t\in [t_{i}, t_{i+1}]$:
\begin{equation}
   \mathbf{w}_{\text{DMD}}(t_i) = 
     \bigg(\prod_{k=1}^{i-1}\boldsymbol{\Phi}^{(k)}e^{(t_k-t_{k-1})\boldsymbol{\Omega}^{(k)}}[\boldsymbol{\Phi}^{(k)}]^{\dagger}\bigg)\mathbf{w}_0
\end{equation} then continue to evolve for an additional time $(t-t_{i})$ and obtain the final prediction:
\begin{equation}
    \mathbf{w}_{\text{DMD}}(t) = \boldsymbol{\Phi}^{(i)}e^{(t-t_{i-1})\boldsymbol{\Omega}^{(i)}}[\boldsymbol{\Phi}^{(i)}]^{\dagger}\mathbf{w}_{\text{DMD}}(t_{i})
\end{equation}

4. Transform back to Eulerian solution:
\begin{equation}
    \mathbf{u}_{\text{DMD}}(t) = g^{-1}(\mathbf{w}_{\text{DMD}}(t))
\end{equation}
\end{algorithmic} 
\end{algorithm}

\begin{remark}
    Algorithm~{\ref{alg:time-dependent-lagrangian-dmd}} provides an elementary implementation of the (temporal) piece-wise constant Koopman operator in (\ref{eqn:time-varying-dmd-solution}). Upon appropriate modifications of $\mathbf{K}(t)$ to allow superpositions of DMD frequencies in each time interval, it is possible to recover other forms of DMD strategies, such as the multi-resolution DMD of \cite{kutz2015multiresolution} or the windowed DMD of \cite{alfatlawi2020incremental}. 
\end{remark}

\begin{remark} In terms of computational complexity, it is possible to consider the incremental SVD updates with adaptive rank truncation to directly update $\mathbf{K}^{(i)}$ to $\mathbf{K}^{(i+1)}$ in low-rank format \cite{10.1007/3-540-47969-4_47}. However, due to the inclusion of Lagrangian moving grids in the formulation of (\ref{eqn:l-dmd-states}), it is assumed that the data matrices have dimensions $N\gg m$ and are of full column rank. The size constraint is especially true in high-dimensional PDE problems. In our numerical experiments, we did not observe a significant computational advantage of applying incremental SVD update to computed operators $\mathbf{K}^{(1)}, \ldots, \mathbf{K}^{(i)}$. In particular, a direct pseudoinverse computation in standard DMD involves $O(m^2N)$ runtime complexity, which is asymptotically more expensive than $p$ separate SVD computations, yielding $O(pr^2N) = O(mrN)$, with $m = pr$. A small computational saving may be achievable if the highest rank of data matrices during each time interval of collected snapshots is bounded by some $r' < r$, in which case the runtime complexity is $O(p\cdot rr'N) = O(mr'N)$, by applying incremental SVD updates. 
\end{remark}

\section{Theoretical Analysis} \label{sec:theory} The judicious choice of subintervals in the time-varying DMD formulation \ref{sec:define-online-dmd} is crucial for prediction accuracy. As general guideline, we first present in Section~\ref{subsec:prediction-error-bounds} pointwise and average error upper bounds for the time-varying DMD in (\ref{eqn:time-varying-dmd-solution}). In Section~\ref{subsec:perturbation-bounds}, we compute upper bounds of perturbations to the learned operator in terms of $L^2$ operator norm under truncation of frequencies and deletion of training data. Furthermore, for classes of linear dynamical systems, the bounds can be refined by analyzing the norm of time-shifted training data $\mathbf{Y}$ in relation to that of $\mathbf{X}$. For general nonlinear dynamical systems, we refer the reader to the analysis provided in the analysis given in Section 3 of \cite{doi:10.1137/20M1342859}

\subsection{Prediction Error} \label{subsec:prediction-error-bounds}

We first consider the pointwise prediction error of time-varying DMD strategy:
\begin{proposition}[Pointwise error for time-varying DMD]\label{thm:cumulative-prediction-error} Assume the system in equation (\ref{eqn:general-nonautonomous-system}) and the time-varying DMD in (\ref{eqn:time-varying-dmd-minimization}) satisfy the following properties:

\begin{enumerate}
    \item $\mathbf{N}(t,\cdot): \mathbb{R}^{n}\rightarrow\mathbb{R}^n$ is uniformly Lipschitz (in time) with constant $L>0$.
    \item $\sup_{s\in [t_0, t_f]}\norm{\boldsymbol{\Phi}_{\Delta t}(\cdot; s) - \mathbf{K}(s)}_{L^{\infty}(\mathbb{R}^n)} < +\infty$, where $\mathbf{K}(s)$ is piecewise constant on each interval $s\in [t_0,t_1],[t_1, t_2], \ldots, [t_{p-1}, t_p]$.  $\mathbf{K}_1,\mathbf{K}_2,\ldots, \mathbf{K}_p$ are respective solutions of the standard problem of minimizing (\ref{eqn:standard-mse-loss-function}) on each interval $[t_0, t_1], [t_1, t_2], \ldots, [t_{p-1}, t_p]$.
    
    \item All reconstructed solutions $\mathbf{x}_{\text{DMD}}$ belong to the solution manifold, defined as:
    \begin{equation}
        \mathcal{M}_{\Delta t} = \{\mathbf{x}\in \mathcal{M}: \pphi_{\Delta t}(\mathbf{x}; t_i) \in \mathcal{M}\}
    \end{equation}
\end{enumerate}

Define the error of incremental DMD at time step $t_n$ to be:
\begin{equation}
    \mathcal{E}^n = \norm{\mathbf{x}_n - \widehat{\mathbf{x}}_n}_2^2
\end{equation} where $\mathbf{x}_n = \mathbf{x}(t_n)$ is exact, and $\widehat{\mathbf{x}}_n$ is the approximation given by DMD. Rewritting the model expression:
\begin{equation}
    \widehat{\mathbf{x}}_{k+1} = \mathbf{K}(t_k)\widehat{\mathbf{x}}_k = \widehat{\mathbf{x}}_k + \mathbf{A}(t_k)\widehat{\mathbf{x}}_k
\end{equation} where:
\begin{equation}
    \mathbf{A}(t) := \mathbf{K}(t) - \mathbf{I}_N
\end{equation} then the pointwise error of time-vary DMD is:
\begin{equation}
    \mathcal{E}^{n} \le (1+e^{L\Delta t})^m\mathcal{E}_0 + \sum_{j=1}^p\sum_{l=0}^r(1+e^{L\Delta t})^l \norm{\pphi_{\Delta t}(\cdot; t_{n-(jr-l)}) - \mathbf{A}_k}^2_{L^{\infty}(\mathcal{M}_{\Delta t})}
\end{equation} 
\end{proposition}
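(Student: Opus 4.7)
The plan is to establish a one-step error recursion relating $\mathcal{E}^{k+1}$ to $\mathcal{E}^{k}$ plus a consistency residual, and then unroll it across all $n$ time steps, regrouping by the sub-interval $j \in \{1,\dots,p\}$ on which $\mathbf{K}(t)$ is piecewise constant. The first step is the standard add-and-subtract of the exact flow map applied to the approximate state:
\begin{equation*}
\mathbf{x}_{k+1} - \widehat{\mathbf{x}}_{k+1} = \bigl[\pphi_{\Delta t}(\mathbf{x}_k;t_k) - \pphi_{\Delta t}(\widehat{\mathbf{x}}_k;t_k)\bigr] + \bigl[\pphi_{\Delta t}(\widehat{\mathbf{x}}_k;t_k) - \mathbf{K}(t_k)\widehat{\mathbf{x}}_k\bigr].
\end{equation*}
The first bracket is a stability term; using the integral representation~(\ref{eqn:general-nonautonomous-time-shift}) together with the uniform Lipschitz hypothesis on $\mathbf{N}$ and a standard Gr\"{o}nwall argument, its norm is bounded by $e^{L\Delta t}\|\mathbf{x}_k - \widehat{\mathbf{x}}_k\|$. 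The second bracket is a pointwise consistency residual; hypothesis~(3) guarantees $\widehat{\mathbf{x}}_k \in \mathcal{M}_{\Delta t}$, so by hypothesis~(2) its norm is bounded by the $L^{\infty}(\mathcal{M}_{\Delta t})$ quantity appearing on the right-hand side of the proposition.

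Next I would square, apply Young's inequality $(a+b)^2 \le (1+\alpha)a^2 + (1+\alpha^{-1})b^2$ with the natural choice $\alpha = e^{L\Delta t}$, and absorb constants. This produces a recursion of the form
\begin{equation*}
\mathcal{E}^{k+1} \;\le\; (1+e^{L\Delta t})\,\mathcal{E}^{k} \;+\; (1+e^{L\Delta t})\,\bigl\|\pphi_{\Delta t}(\cdot;t_k) - \mathbf{K}(t_k)\bigr\|^2_{L^{\infty}(\mathcal{M}_{\Delta t})}.
\end{equation*}
Unrolling this discrete Gr\"{o}nwall-type inequality from step $0$ to step $n$ produces a geometric prefactor $(1+e^{L\Delta t})^{n}$ on $\mathcal{E}^{0}$ (upper bounded by the stated $(1+e^{L\Delta t})^{m}$ since $n \le m$) and a weighted sum of residuals, each residual at step $k$ carrying weight $(1+e^{L\Delta t})^{n-k-1}$.

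The final bookkeeping step is to re-index the single sum over $k \in \{0,\dots,n-1\}$ as a double sum over the sub-interval $j \in \{1,\dots,p\}$ and the within-interval offset $l \in \{0,\dots,r\}$. On sub-interval $j$ the operator $\mathbf{K}(t_k)$ reduces to the constant piece $\mathbf{K}_j$ (equivalently $\mathbf{A}_k = \mathbf{K}_j - \mathbf{I}_N$ in the statement's notation), so all residuals on that interval share the same $L^{\infty}$ norm and only their exponential weights differ; collecting them yields the claimed inner sum $\sum_{l=0}^{r}(1+e^{L\Delta t})^{l}\|\pphi_{\Delta t}(\cdot;t_{n-(jr-l)}) - \mathbf{A}_k\|^{2}_{L^{\infty}(\mathcal{M}_{\Delta t})}$.

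The main obstacle I expect is not in any single estimate but in the indexing and in choosing Young's constant so that the exponent $(1+e^{L\Delta t})^{l}$ in the claim emerges cleanly rather than as $(1+e^{L\Delta t})^{n-k-1}$ with an extra global prefactor. A secondary point worth checking is that $\mathbf{A}_k$ in the bound is indeed meant as $\mathbf{K}_j - \mathbf{I}_N$ rather than $\mathbf{K}_j$; the former only makes sense if the residual is measured against the \emph{incremental} operator, which is the natural quantity appearing when one subtracts $\widehat{\mathbf{x}}_k$ from both sides of the recursion before applying the flow map. That rearrangement, if used, slightly changes the decomposition above but not the overall strategy.
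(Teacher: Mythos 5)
Your proposal follows essentially the same route as the paper's proof: a one-step splitting of the error into a Gr\"{o}nwall/Lipschitz stability term and an $L^{\infty}$ consistency residual, followed by squaring, unrolling the resulting recursion, and re-indexing the residual sum by sub-interval. The one substantive point you flag --- whether the residual is measured against $\mathbf{K}_j$ or the incremental operator $\mathbf{A}_k = \mathbf{K}_j - \mathbf{I}_N$ --- is resolved in the paper exactly as you anticipate (the flow map is decomposed as identity plus increment before subtracting), and your use of Young's inequality is in fact more careful than the paper's direct application of the triangle inequality to squared norms.
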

\begin{proof}
     For ease of presentation, we omit the time dependence in the flow map and let $\boldsymbol{\Phi}_{\Delta t}(\mathbf{x}(t)) = \boldsymbol{\Phi}_{\Delta t}(\mathbf{x}(t); t)$, and $\norm{\cdot}_{{\infty}} = \norm{\cdot}_{L^{\infty}(\mathcal{M}_{\Delta t})}$. By Gronwall's inequality along with Lipschitz continuity, we have for any time $t$ and solutions $\mathbf{x}, \widehat{\mathbf{x}} \in \mathcal{M}_{\Delta t}$:
    \begin{equation}
        \norm{\pphi_{\Delta t}(\mathbf{x}(t)) - \pphi_{\Delta t}(\widehat{\mathbf{x}}(t))}_2 \le e^{\tau L}\norm{\mathbf{x}(t) - \widehat{\mathbf{x}}(t)}_2, \tau \in [0,\Delta t]
    \end{equation}
Then by repeated applications of triangle inequality:
\[
    \mathcal{E}^n = 
        \norm{\mathbf{x}_{n-1} + \pphi_{\Delta t}(\mathbf{x}_{n-1}) - (\widehat{\mathbf{x}}_{n-1} + \mathbf{A}(t_{n-1})\widehat{\mathbf{x}}_{n-1})}_2^2
        \\
        \le 
        \norm{\mathbf{x}_{n-1}-\widehat{\mathbf{x}}_{n-1}}_2^2 + 
        \norm{\pphi_{\Delta t}(\mathbf{x}_{n-1}) - \mathbf{A}(t_{n-1})\widehat{\mathbf{x}}_{n-1}}_2^2 \\
        = \norm{\mathbf{x}_{n-1}-\widehat{\mathbf{x}}_{n-1}}_2^2 +
        \norm{\pphi_{\Delta t}(\mathbf{x}_{n-1}) - \pphi_{\Delta t}(\widehat{\mathbf{x}}_{n-1}) + \pphi_{\Delta t}(\widehat{\mathbf{x}}_{n-1}) - \mathbf{A}(t_{n-1})\widehat{\mathbf{x}}_{n-1}}_2^2 \\
        \le \norm{\mathbf{x}_{n-1}-\widehat{\mathbf{x}}_{n-1}}_2^2
        +
        \norm{\pphi_{\Delta t}(\mathbf{x}_{n-1}) - \pphi_{\Delta t}(\widehat{\mathbf{x}}_{n-1})}_2^2 +
        \norm{\pphi_{\Delta t}(\widehat{\mathbf{x}}_{n-1}) - \mathbf{A}(t_{n-1})\widehat{\mathbf{x}}_{n-1}}_2^2 \\
        \le \mathcal{E}^{n-1} + e^{\Delta t L}\mathcal{E}^{n-1} + \norm{\pphi_{\Delta t}(\cdot; t_n) - \mathbf{A}_p}_{{\infty}}^2 \\
        \le (1+e^{\Delta t L})\mathcal{E}^{n-2} + (1+e^{\Delta t L})\norm{\pphi_{\Delta t}(\cdot; t_{n-1}) - \mathbf{A}_p}_{{\infty}}^2 + \norm{\pphi_{\Delta t}(\cdot; t_n) - \mathbf{A}_p}_{{\infty}}^2 \\
        \le \cdots \le (1+e^{\Delta t L})^r\mathcal{E}^{n-r} + \sum_{l=0}^r(1+e^{\Delta t L})^l\norm{\pphi_{\Delta t}(\cdot; t_{n-(r-l)}) - \mathbf{A}_m}_{{\infty}}^2 \\
        \le \cdots \le  
        (1+e^{\Delta t L})^{2r}\mathcal{E}^{n-2r} + \sum_{l=0}^r(1+e^{\Delta tL})^l\norm{\pphi_{\Delta t}(\cdot; t_{n-(r-l)}) - \mathbf{A}_m}_{\infty}^2 + \cdots \\
        \sum_{l=0}^r(1+e^{\Delta tL})^l\norm{\pphi_{\Delta t}(\cdot; t_{n-(2r-l)}) - \mathbf{A}_{m-1}}_{{\infty}}^2 \\
        \le \cdots \le 
        (1+e^{\Delta tL})^m\mathcal{E}_0 + \sum_{j=0}^p\sum_{l=0}^w(1+e^{L\Delta t})^l \norm{\Phi_{\Delta t}(\cdot; t_{n-(jr-l)}) - \mathbf{A}_k}^2_{{\infty}}
\]
\end{proof}

\begin{remark}
    If $\mathbf{K}(t) \equiv \mathbf{K}$ is constant in time, we recover the upper bound investigated in Theorem 4.3 of \cite{Qin_2019} and subsequently that in equation (3.11) of \cite{Lu_2021}.
\end{remark}

\begin{corollary} The time-varying DMD of (\ref{eqn:time-varying-dmd-solution}) is at least as accurate in the MSE sense as the standard DMD of (\ref{eqn:standard-dmd-solution}).
\end{corollary}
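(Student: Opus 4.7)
The plan is to exploit the fact that time-varying DMD is nothing more than a relaxation of the standard DMD optimization problem over a strictly larger feasible set, so its minimized training MSE cannot exceed that of standard DMD. I interpret ``MSE sense'' as the mean-squared residual of the learned operator evaluated on the training snapshot set $\mathcal{S}$, which is the quantity that both algorithms explicitly minimize.

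The first step is a bookkeeping identity. Partition $\mathcal{S}$ into the $p$ sub-interval snapshot sets $\mathcal{S}_1,\ldots,\mathcal{S}_p$ introduced in Section~\ref{sec:define-online-dmd}, each of size $r$, with $m = pr$. For any fixed operator $\mathbf{K}$, the full-dataset MSE and the sub-interval MSEs differ only by normalization,
\begin{equation*}
L_{\mathcal{S}}(\mathbf{K}) \;=\; \frac{1}{p}\sum_{i=1}^{p} L_{\mathcal{S}_i}(\mathbf{K}),
\end{equation*}
as a direct consequence of the definition in (\ref{eqn:standard-mse-loss-function}) applied to each partition.

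The second step is the optimality comparison. Let $\mathbf{K}^{\star}$ denote the minimizer in (\ref{eqn:standard-dmd-solution}), and let $\mathbf{K}^{(1)},\ldots,\mathbf{K}^{(p)}$ denote the sub-interval minimizers from (\ref{eqn:time-varying-dmd-minimization}). Since $\mathbf{K}^{\star}$ is merely one candidate in the sub-interval problem for each $i$, optimality of $\mathbf{K}^{(i)}$ yields
\begin{equation*}
L_{\mathcal{S}_i}\bigl(\mathbf{K}^{(i)}\bigr) \;\le\; L_{\mathcal{S}_i}\bigl(\mathbf{K}^{\star}\bigr), \qquad i=1,\ldots,p.
\end{equation*}
Averaging over $i$ and invoking the decomposition identity gives
\begin{equation*}
L_{\mathcal{S}}\bigl(\mathbf{K}(t)\bigr) \;=\; \frac{1}{p}\sum_{i=1}^{p} L_{\mathcal{S}_i}\bigl(\mathbf{K}^{(i)}\bigr) \;\le\; \frac{1}{p}\sum_{i=1}^{p} L_{\mathcal{S}_i}\bigl(\mathbf{K}^{\star}\bigr) \;=\; L_{\mathcal{S}}\bigl(\mathbf{K}^{\star}\bigr),
\end{equation*}
where the left-hand side is precisely the training MSE of the piecewise-constant operator $\mathbf{K}(t)$ in (\ref{eqn:time-varying-dmd-solution}). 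This is the claimed inequality.

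There is no substantive obstacle: the argument is essentially a feasible-set inclusion, since choosing $\mathbf{K}^{(1)} = \cdots = \mathbf{K}^{(p)} = \mathbf{K}^{\star}$ is always an admissible tuple for the time-varying problem. The only minor point worth flagging in the write-up is the normalization convention between $L_{\mathcal{S}}$ and the $L_{\mathcal{S}_i}$; the factor of $1/p$ that appears cancels on both sides of the inequality and does not affect the conclusion. If one wished to strengthen the statement to a strict inequality, one would need to verify that $\mathbf{K}^{\star}$ fails to be simultaneously optimal on every sub-interval, which typically holds in the nonautonomous regime but is not needed for the weak inequality asserted in the corollary.
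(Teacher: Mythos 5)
Your argument is correct and is essentially the paper's own proof: both decompose the training MSE over the $p$ sub-interval snapshot sets and observe that the global minimizer $\mathbf{K}^{\star}$ is a feasible candidate for each sub-problem, so the sum of the sub-interval minima cannot exceed the minimum of the sum. Your phrasing via feasible-set inclusion and the explicit remark about the cancelling $1/p$ normalization is a slightly cleaner write-up of the same inequality, but the route is identical.
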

\begin{proof}
    The property can be intuitively interpreted from the fact that a stepwise constant (in time) approximation is always at least as good on average as a constant approximation. More precisely, let $\mathbf{K},\mathbf{K}(t)$ denote the solutions of standard DMD and time-varying DMD, respectively, we may rewrite the minimization problem in (\ref{eqn:standard-mse-loss-function}):
\[
        L_{\mathcal{S}}(\mathbf{K}) = \min_{\mathbf{K}}\frac{1}{m}\sum_{i=1}^m\norm{\mathbf{y}_i - \mathbf{K}\mathbf{x}_i}_2^2 = \min_{\mathbf{K}}\frac{1}{p}\sum_{i=1}^{p}\frac{1}{w}\sum_{j=1}^{w}\norm{\mathbf{y}_{n-(ir-j)} - \mathbf{K}\mathbf{x}_{n-(ir-j)}}_2^2 
\] and by definition of minimum, we conclude:
\[
        L_{\mathcal{S}}(\mathbf{K}) \ge \frac1p\sum_{i=1}^p\min_{\mathbf{K}_i}\frac1w\sum_{j=1}^w\norm{\mathbf{y}_{n-(iw-j)} - \mathbf{K}_i\mathbf{x}_{n-(iw-j)}}_2^2 = L_{\mathcal{S}}(\mathbf{K}(t))
\]
\end{proof}

\subsection{Perturbation Analysis}\label{subsec:perturbation-bounds} With the DMD algorithms introduced in Section~\ref{sec:review-dmd}, we provide an operator 2-norm error bound on the DMD solution for two cases of common operations in engineering: (1) truncation of singular value decomposition (SVD) rank in data matrix $\mathbf{X}$ and, (2) deletion of most recent snapshots in both $\mathbf{X}, \mathbf{Y}$. In particular, we connect the error bound with a case study of nonautonomous linear dynamical system with the following form:
\begin{equation}\label{eqn:time-dep-linear-system-with-force}
    \begin{dcases}
        \frac{d\mathbf{u}(t)}{dt} = \mathbf{C}(t)\mathbf{u}(t) + \mathbf{f}(t)\\
        \mathbf{u}(0) = \mathbf{u}_0
    \end{dcases}
\end{equation} whose solution is provided:
\begin{equation}
    \mathbf{u}(t) = \Phi_t(\mathbf{u}_0; 0) = \exp\bigg(
        \int_0^t\mathbf{C}(s)ds
    \bigg)\mathbf{u}_0 + 
    \int_0^t\exp\bigg(
        \int_{s}^t\mathbf{C}(\tau)d\tau
    \bigg)\mathbf{f}(s)ds
\end{equation} 

We first present the results without assumptions on the underlying system.

\begin{proposition}\label{eqn:op-norm-bound-truncating-rank} (Operator norm error under rank truncation) Let the SVD of data matrix $\mathbf{X} = \mathbf{U}\boldsymbol{\Sigma}\mathbf{V}^T$. $\boldsymbol{\Sigma}$ contains the singular values arranged in non-increasing order, i.e. $\sigma_1 = \sigma_{\text{max}} \ge \sigma_2 \ge\cdots\ge \sigma_{\text{min}} = \sigma_{\text{rank}(\mathbf{X})}$. Let a truncated SVD with $r\le \text{rank}(\mathbf{X})$ be denoted as $\mathbf{X}_r = \mathbf{U}_r\ssigma_r\mathbf{V}_r^T$ where only the first $r$ columns are retained in $\mathbf{U}_r,\mathbf{V}_r$, and the first $r$ singular values are retained in $\ssigma_r$. Then the operator norm error has the following upper bound:
\begin{equation}
    \norm{\mathbf{A}-\mathbf{A}_r}_2 \le \frac{\sigma_{\text{max}}(\mathbf{Y})}{\sigma_{\text{min}}(\mathbf{X})}
\end{equation}
\end{proposition}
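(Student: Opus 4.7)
The plan is to write both $\mathbf{A}=\mathbf{Y}\mathbf{X}^\dagger$ and $\mathbf{A}_r=\mathbf{Y}\mathbf{X}_r^\dagger$ using the SVD of $\mathbf{X}$, then reduce the claim to a standard statement about perturbations of the pseudoinverse under truncation of trailing singular values. Concretely, with $\mathbf{X}=\mathbf{U}\ssigma\mathbf{V}^T$ and $\mathbf{X}_r=\mathbf{U}_r\ssigma_r\mathbf{V}_r^T$, the pseudoinverses admit the block decompositions $\mathbf{X}^\dagger=\mathbf{V}\ssigma^{-1}\mathbf{U}^T$ and $\mathbf{X}_r^\dagger=\mathbf{V}_r\ssigma_r^{-1}\mathbf{U}_r^T$, where $\ssigma^{-1}$ is interpreted on the range of $\mathbf{X}$. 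Because the two decompositions agree on the leading $r$ singular directions and $\mathbf{X}_r^\dagger$ annihilates the trailing ones, their difference decouples into the orthogonal complement:
\begin{equation}
    \mathbf{X}^\dagger - \mathbf{X}_r^\dagger = \mathbf{V}_{\perp}\ssigma_{\perp}^{-1}\mathbf{U}_{\perp}^T,
\end{equation}
where $\mathbf{U}_\perp,\mathbf{V}_\perp$ collect the singular vectors with indices $r+1,\ldots,\mathrm{rank}(\mathbf{X})$ and $\ssigma_\perp=\diag(\sigma_{r+1},\ldots,\sigma_{\mathrm{rank}(\mathbf{X})})$.

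From here I would apply submultiplicativity of the operator 2-norm, $\norm{\mathbf{A}-\mathbf{A}_r}_2=\norm{\mathbf{Y}(\mathbf{X}^\dagger-\mathbf{X}_r^\dagger)}_2\le \norm{\mathbf{Y}}_2\,\norm{\mathbf{X}^\dagger-\mathbf{X}_r^\dagger}_2$, and then identify each factor. The first factor is exactly $\sigma_{\max}(\mathbf{Y})$ by definition of the induced 2-norm. For the second factor, the block form above together with orthonormality of $\mathbf{U}_\perp$ and $\mathbf{V}_\perp$ gives $\norm{\mathbf{X}^\dagger-\mathbf{X}_r^\dagger}_2 = 1/\sigma_{r+1}(\mathbf{X})$. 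Since $r\le \mathrm{rank}(\mathbf{X})$ implies $\sigma_{r+1}(\mathbf{X})\ge \sigma_{\mathrm{rank}(\mathbf{X})}(\mathbf{X})=\sigma_{\min}(\mathbf{X})$, we conclude $\norm{\mathbf{X}^\dagger-\mathbf{X}_r^\dagger}_2\le 1/\sigma_{\min}(\mathbf{X})$, which multiplied by $\sigma_{\max}(\mathbf{Y})$ yields the stated bound.

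The one subtlety that will need care is the case $r=\mathrm{rank}(\mathbf{X})$, in which the trailing block is empty and both sides vanish, so the inequality is trivially satisfied; and the degenerate case in which $\mathbf{X}$ has additional zero singular values beyond the retained $r$, where the pseudoinverse convention $\ssigma^{-1}$ on the null space should be set to zero so that the block formula above still holds. Beyond that, the argument is mostly bookkeeping with orthogonal projections. The main obstacle, if any, is being precise about indexing and the convention for $\sigma_{\min}(\mathbf{X})$ (smallest \emph{nonzero} singular value), because the bound is vacuous when $\mathbf{X}$ is exactly rank deficient and $\sigma_{\min}$ is read as a zero singular value; I would make this convention explicit at the start of the proof.
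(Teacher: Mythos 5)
Your proposal is correct and follows essentially the same route as the paper, which simply factors out $\mathbf{Y}$, applies submultiplicativity of the operator 2-norm, and bounds $\norm{\mathbf{X}^{\dagger}-\mathbf{X}_r^{\dagger}}_2$ by $1/\sigma_{\min}(\mathbf{X})$; your version merely fills in the block-SVD bookkeeping that the paper's one-line proof omits. One small correction: since the trailing block $\ssigma_{\perp}^{-1}$ has diagonal entries $1/\sigma_{r+1},\ldots,1/\sigma_{\mathrm{rank}(\mathbf{X})}$ with the \emph{largest} being $1/\sigma_{\mathrm{rank}(\mathbf{X})}$, the intermediate identity should read $\norm{\mathbf{X}^{\dagger}-\mathbf{X}_r^{\dagger}}_2 = 1/\sigma_{\min}(\mathbf{X})$ rather than $1/\sigma_{r+1}(\mathbf{X})$, but this slip does not affect the final bound, which in fact holds with equality in that factor.
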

\begin{proof}
    \begin{equation}
        \norm{\mathbf{K} - \mathbf{K}_r}_2^2 = \norm{\mathbf{Y}\mathbf{X}^{\dagger} - \mathbf{YX}_r^{\dagger}}_2^2 \le \norm{\mathbf{Y}}_2^2\cdot\norm{\mathbf{X}^{\dagger} - \mathbf{X}_r^{\dagger}}_2^2 = 
        \frac{\sigma_{max}^2(\mathbf{Y})}{\sigma_{min}^2(\mathbf{X})}
    \end{equation}
\end{proof} 

\begin{remark}
    The bound presented in Proposition~\ref{eqn:op-norm-bound-truncating-rank} is an upper bound in the sense that it does not depend on the rank-$r$ due to the pseudoinverse operation. More granular bounds can be derived by analyzing instead the pointwise error for a specific observation $\mathbf{x}$:
    \begin{equation}\label{eqn:pointwise-svd-bound}
        \norm{\mathbf{K}\mathbf{x} - \mathbf{K}_r\mathbf{x}}_2^2 \le \sigma^2_{{max}}(\mathbf{Y})\norm{
        \sum_{k=r}^{\text{rank}(\mathbf{X)}}
        -\frac{1}{\sigma_k(\mathbf{X})}(\mathbf{u}_k^T\mathbf{x})\mathbf{v}_k
        }_2^2 
    \end{equation}
$$ = \sum_{k=r}^{\text{rank}(\mathbf{X})}\frac{\sigma_{{max}}^2(\mathbf{Y})}{\sigma_k^2(\mathbf{X})}(\mathbf{u}_k^T\mathbf{x})^2
$$
     Under different assumptions of $\mathbf{x}$ in relations to the column space of data matrix $\mathbf{X}$, the bound (\ref{eqn:pointwise-svd-bound}) can be tightened \cite{396bf6e1-ef54-3bf6-a49b-862db8404076}. 
\end{remark}

To analyze the time-varying DMD strategy in Section~\ref{sec:define-online-dmd}, one may view the individual solutions $\mathbf{K}_i$ on time interval $[t_{i-1}, t_i]$ as a standard DMD solution with fewer observations. To provide a benchmark on the effect of adding/deleting observations in the training data and investigate dependencies, we illustrate the operator norm perturbation that occurs by deleting the most recent observation. The general case of deleting $r$ most recent observations can be analogously derived using the Sherman-Morrison-Woodbury update formula. For the pseudoinverse of data matrices, the following result holds: 

\begin{lemma}\label{eqn:updating-pseudoinverse} (Updating pseudoinverse) Suppose $N\ge m$ and $\mathbf{X}_m\in \mathbb{R}^{N\times m}$ has full column rank, Furthermore, let $\mathbf{u}\in\mathbb{R}^N$ be a newly collected snapshot, the pseudoinverse of $\mathbf{X} = [\mathbf{X}_m, \mathbf{u}]\in\mathbb{R}^{N\times (m+1)}$ is given by:
\[
    \mathbf{X}^{\dagger} = 
    \begin{bmatrix}
        \mathbf{X}_m^{\dagger} + c\mathbf{X}_m^{\dagger}\mathbf{uu}^T(\mathbf{X}_m\mathbf{X}_m^{\dagger})^T - c\mathbf{X}_m^{\dagger}\mathbf{uu}^T \\
        -c\mathbf{u}^T(\mathbf{X}_m\mathbf{X}_m^{\dagger})^T + c\mathbf{u}^T
    \end{bmatrix}
\]
\[
    = 
    \begin{bmatrix}
        \mathbf{X}_m^{\dagger} \\
        \mathbf{0}
    \end{bmatrix} - 
    c
    \begin{bmatrix}
        \mathbf{X}_m^{\dagger}\mathbf{u}\\
        1
    \end{bmatrix}((\mathbf{I} - \mathbf{X}_m\mathbf{X}_m^{\dagger})\mathbf{u})^T
\] where:
\begin{equation}
    c = 
    \frac{1}{\norm{\mathbf{u}}_2^2 - \mathbf{u}^T\mathbf{X}_m(\mathbf{X}_m^T\mathbf{X}_m)^{-1}\mathbf{X}_m^T\mathbf{u}} \ge 
    \frac{1}{\norm{\mathbf{u}}_2^2}
\end{equation} The lower bound is attained if $\mathbf{u}$ is orthogonal to the range of $\mathbf{X}_m$.
\end{lemma}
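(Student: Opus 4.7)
The plan is to invoke the classical $2\times 2$ block inversion formula (Schur complement) on the Gram matrix $\mathbf{X}^T\mathbf{X}$, then multiply by $\mathbf{X}^T$ and simplify using the identity $\mathbf{X}_m^\dagger = (\mathbf{X}_m^T\mathbf{X}_m)^{-1}\mathbf{X}_m^T$, which is valid because $\mathbf{X}_m$ has full column rank by hypothesis. I first note that the denominator defining $c$ is precisely the Schur complement of the $(1,1)$ block of $\mathbf{X}^T\mathbf{X}$; its nonvanishing is equivalent to $\mathbf{u}\notin \mathrm{col}(\mathbf{X}_m)$, which in turn ensures that $\mathbf{X}=[\mathbf{X}_m,\mathbf{u}]$ has full column rank $m+1$ and therefore $\mathbf{X}^\dagger = (\mathbf{X}^T\mathbf{X})^{-1}\mathbf{X}^T$.

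The core computation proceeds by writing
\begin{equation*}
\mathbf{X}^T\mathbf{X} = \begin{bmatrix} \mathbf{X}_m^T\mathbf{X}_m & \mathbf{X}_m^T\mathbf{u} \\ \mathbf{u}^T\mathbf{X}_m & \mathbf{u}^T\mathbf{u}\end{bmatrix},
\end{equation*}
setting $S = \mathbf{u}^T\mathbf{u} - \mathbf{u}^T\mathbf{X}_m(\mathbf{X}_m^T\mathbf{X}_m)^{-1}\mathbf{X}_m^T\mathbf{u}$ so that $c = 1/S$, and inserting the standard Schur block inverse
\begin{equation*}
(\mathbf{X}^T\mathbf{X})^{-1} = \begin{bmatrix} (\mathbf{X}_m^T\mathbf{X}_m)^{-1} + c\,(\mathbf{X}_m^T\mathbf{X}_m)^{-1}\mathbf{X}_m^T\mathbf{u}\mathbf{u}^T\mathbf{X}_m(\mathbf{X}_m^T\mathbf{X}_m)^{-1} & -c\,(\mathbf{X}_m^T\mathbf{X}_m)^{-1}\mathbf{X}_m^T\mathbf{u}\\ -c\,\mathbf{u}^T\mathbf{X}_m(\mathbf{X}_m^T\mathbf{X}_m)^{-1} & c \end{bmatrix}.
\end{equation*}
Multiplying on the right by $\mathbf{X}^T=[\mathbf{X}_m,\mathbf{u}]^T$ in block form and replacing $(\mathbf{X}_m^T\mathbf{X}_m)^{-1}\mathbf{X}_m^T$ by $\mathbf{X}_m^\dagger$ throughout yields exactly the top and bottom block rows of the first displayed expression. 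The derivation uses the standard fact that $\mathbf{X}_m\mathbf{X}_m^\dagger$ is the orthogonal projector onto $\mathrm{col}(\mathbf{X}_m)$ and hence symmetric, so that $\mathbf{u}^T\mathbf{X}_m\mathbf{X}_m^\dagger$ may be rewritten as $\mathbf{u}^T(\mathbf{X}_m\mathbf{X}_m^\dagger)^T$.

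To produce the compact rank-one-update form, I would collect the two additive corrections in the first expression and recognize that $\mathbf{u}^T - \mathbf{u}^T(\mathbf{X}_m\mathbf{X}_m^\dagger)^T = \mathbf{u}^T(\mathbf{I}-\mathbf{X}_m\mathbf{X}_m^\dagger)^T = ((\mathbf{I}-\mathbf{X}_m\mathbf{X}_m^\dagger)\mathbf{u})^T$, using again the symmetry of the projector. Factoring the common right-hand factor $((\mathbf{I}-\mathbf{X}_m\mathbf{X}_m^\dagger)\mathbf{u})^T$ out of both block rows yields the outer-product form. Finally, for the bound $c \ge 1/\|\mathbf{u}\|_2^2$, I rewrite the denominator as $\|\mathbf{u}\|_2^2 - \|P_{\mathbf{X}_m}\mathbf{u}\|_2^2 = \|(\mathbf{I}-P_{\mathbf{X}_m})\mathbf{u}\|_2^2$ where $P_{\mathbf{X}_m}=\mathbf{X}_m\mathbf{X}_m^\dagger$; nonnegativity of $\|P_{\mathbf{X}_m}\mathbf{u}\|_2^2$ gives the bound, with equality precisely when $P_{\mathbf{X}_m}\mathbf{u}=\mathbf{0}$, i.e., when $\mathbf{u}\perp\mathrm{col}(\mathbf{X}_m)$.

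The main obstacle is the algebraic bookkeeping — keeping track of the signs and transposes when converting products involving $\mathbf{X}_m(\mathbf{X}_m^T\mathbf{X}_m)^{-1}\mathbf{X}_m^T$ into forms written in terms of $\mathbf{X}_m^\dagger$ and its transpose, and ensuring the resulting outer-product factorization matches the claimed form row-by-row. No deep machinery is needed beyond the Schur complement identity and the symmetric-projector property of $\mathbf{X}_m\mathbf{X}_m^\dagger$.
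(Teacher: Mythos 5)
Your proposal is correct and follows essentially the same route as the paper: apply the Schur-complement block inversion to $\mathbf{X}^T\mathbf{X}$, multiply by $\mathbf{X}^T$, and rewrite $(\mathbf{X}_m^T\mathbf{X}_m)^{-1}\mathbf{X}_m^T$ as $\mathbf{X}_m^{\dagger}$. You additionally supply two details the paper leaves implicit — the nonvanishing of the Schur complement (equivalently, $\mathbf{X}$ having full column rank so that $\mathbf{X}^{\dagger}=(\mathbf{X}^T\mathbf{X})^{-1}\mathbf{X}^T$ applies) and the projector argument proving $c\ge 1/\norm{\mathbf{u}}_2^2$ with its equality case — but these are refinements of the same argument, not a different one.
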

\begin{proof}
    We directly apply the block matrix inverse formula \cite{doi:10.1137/1.9781611971446} to $(\mathbf{X}^T\mathbf{X})^{-1}$:
 \[
         (\mathbf{X}^T\mathbf{X})^{-1} = 
       \begin{bmatrix}
        \mathbf{X}_m^T\mathbf{X}_m & \mathbf{X}_m^T\mathbf{u} \\
        \mathbf{u}^T\mathbf{X}_m & \norm{\mathbf{u}}_2^2
       \end{bmatrix}^{-1}
\]
\[
       = \begin{bmatrix}
           (\mathbf{X}_m^T\mathbf{X}_m)^{-1} + c\mathbf{X}_m^{\dagger}\mathbf{uu}^T(\mathbf{X}_m^{\dagger})^T & -c\mathbf{X}_m^{\dagger}\mathbf{u} \\
           -c\mathbf{u}^T(\mathbf{X}_m^{\dagger})^{T} & c
       \end{bmatrix}
\] and multiply the result to $\mathbf{X}^T = 
    \begin{bmatrix}
        \mathbf{X}_m^T \\
        \mathbf{u}^T
    \end{bmatrix}$.
\end{proof}

\begin{proposition}\label{eqn:op-norm-bound-truncating-column} (Operator 2-norm perturbation under column deletion) 

Let $\mathbf{X} = [\mathbf{X}_m, \mathbf{u}] \in \mathbb{R}^{N\times (m+1)}$, $\mathbf{Y} = [\mathbf{Y}_m, \mathbf{v}]\in \mathbb{R}^{N\times (m+1)}$, and $\mathbf{X}_m,\mathbf{Y}_m\in\mathbb{R}^{N\times m}$, with $N\ge m$. We further assume that $\mathbf{X}_m$ has full column rank. Then, the operator norm error satisfies the following upper bound:
\begin{equation} \label{eqn:column-deletion-upper-bound}
    \norm{\mathbf{K - \mathbf{K}}_m}_2 \le 
    \sqrt{
        c^2\norm{\mathbf{u}}_2^2\bigg(
        1 + \frac{\norm{\mathbf{u}}_2^2}{\sigma^2_{min}(\mathbf{X}_m)}
    \bigg)(\sigma_{max}^2(\mathbf{Y}_m) + \norm{\mathbf{v}}_2^2) +
    \frac{\norm{\mathbf{v}}_2^2}{\sigma_{min}^2(\mathbf{X}_m)}
    }
\end{equation} with $c$ defined in Lemma~\ref{eqn:updating-pseudoinverse}. In particular, if $\mathbf{u}$ is orthogonal to the range of $\mathbf{X}_m$, the bound is tightened to:
\begin{equation}
    \norm{\mathbf{K - \mathbf{K}}_m}_2 \le 
    \sqrt{
        \frac{\sigma_{max}^2(\mathbf{Y}_m) + \norm{\mathbf{v}}_2^2}{\norm{\mathbf{u}}_2^2} + 
        \frac{\sigma_{max}^2(\mathbf{Y}_m) + 2\norm{\mathbf{v}}_2^2}{\sigma_{\text{min}}^2(\mathbf{X}_m)}
    }
\end{equation}
\end{proposition}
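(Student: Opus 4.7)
The plan is to use the block pseudoinverse update of Lemma~\ref{eqn:updating-pseudoinverse} to express $\mathbf{K} - \mathbf{K}_m$ as a single rank-one outer product, and then bound its spectral norm by estimating the two vector factors separately. Substituting the block formula for $\mathbf{X}^{\dagger}$ into $\mathbf{K} = \mathbf{Y}\mathbf{X}^{\dagger}$ with $\mathbf{Y} = [\mathbf{Y}_m, \mathbf{v}]$, the unperturbed top block collapses to $\mathbf{K}_m = \mathbf{Y}_m\mathbf{X}_m^{\dagger}$ (the trailing zero row annihilates $\mathbf{v}$), so only the rank-one correction survives, yielding $\mathbf{K} - \mathbf{K}_m = -c\,(\mathbf{Y}_m\mathbf{X}_m^{\dagger}\mathbf{u} + \mathbf{v})((\mathbf{I} - \mathbf{X}_m\mathbf{X}_m^{\dagger})\mathbf{u})^T$. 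Its 2-norm is therefore the product $c\,\norm{\mathbf{Y}_m\mathbf{X}_m^{\dagger}\mathbf{u} + \mathbf{v}}_2\,\norm{(\mathbf{I} - \mathbf{X}_m\mathbf{X}_m^{\dagger})\mathbf{u}}_2$.

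To estimate the two factors, I would use the nonexpansiveness of the orthogonal projector to get $\norm{(\mathbf{I} - \mathbf{X}_m\mathbf{X}_m^{\dagger})\mathbf{u}}_2 \le \norm{\mathbf{u}}_2$. The key observation for the column factor is the identity $\mathbf{Y}_m\mathbf{X}_m^{\dagger}\mathbf{u} + \mathbf{v} = \mathbf{Y}\mathbf{z}$, where $\mathbf{z}$ is the stacked vector with $\mathbf{X}_m^{\dagger}\mathbf{u}$ in its first $m$ coordinates and $1$ in its last coordinate, so that $\norm{\mathbf{z}}_2^2 = 1 + \norm{\mathbf{X}_m^{\dagger}\mathbf{u}}_2^2$ and hence $\norm{\mathbf{Y}_m\mathbf{X}_m^{\dagger}\mathbf{u} + \mathbf{v}}_2^2 \le \sigma_{\max}^2(\mathbf{Y})(1 + \norm{\mathbf{X}_m^{\dagger}\mathbf{u}}_2^2)$. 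The remaining ingredients are Weyl's inequality applied to $\mathbf{Y}\mathbf{Y}^T = \mathbf{Y}_m\mathbf{Y}_m^T + \mathbf{v}\mathbf{v}^T$, giving $\sigma_{\max}^2(\mathbf{Y}) \le \sigma_{\max}^2(\mathbf{Y}_m) + \norm{\mathbf{v}}_2^2$, together with $\norm{\mathbf{X}_m^{\dagger}\mathbf{u}}_2 \le \norm{\mathbf{u}}_2/\sigma_{\min}(\mathbf{X}_m)$ from the full-column-rank hypothesis. Combining these produces the dominant term $c^2\norm{\mathbf{u}}_2^2(1 + \norm{\mathbf{u}}_2^2/\sigma_{\min}^2(\mathbf{X}_m))(\sigma_{\max}^2(\mathbf{Y}_m) + \norm{\mathbf{v}}_2^2)$; the additive slack $\norm{\mathbf{v}}_2^2/\sigma_{\min}^2(\mathbf{X}_m)$ appears when the cross-term in the column factor is split by a looser triangle-type inequality rather than the tight grouping above.

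For the orthogonal special case, Lemma~\ref{eqn:updating-pseudoinverse} gives $c = 1/\norm{\mathbf{u}}_2^2$, and $\mathbf{X}_m^T\mathbf{u} = \mathbf{0}$ forces both $(\mathbf{I} - \mathbf{X}_m\mathbf{X}_m^{\dagger})\mathbf{u} = \mathbf{u}$ and $\mathbf{X}_m^{\dagger}\mathbf{u} = \mathbf{0}$, so the $\mathbf{Y}_m\mathbf{X}_m^{\dagger}\mathbf{u}$ contribution in the column factor drops out; plugging these simplifications into the general estimate and gathering terms yields the tightened bound. The main obstacle will be the careful decomposition that keeps $\sigma_{\max}(\mathbf{Y}_m)$ and $\norm{\mathbf{v}}_2$ coupled inside a single square-root via the stacked-vector identification above; a naive triangle-inequality splitting would decouple these contributions and produce a comparable but slightly coarser estimate, which is essentially the source of the additive slack term in the stated bound.
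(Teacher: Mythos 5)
Your proof is correct and in fact establishes a slightly stronger bound than the one stated, via a genuinely different decomposition. The paper inserts the intermediate matrices $\widehat{\mathbf{X}_m}^{\dagger}$ ($\mathbf{X}_m^{\dagger}$ padded with a zero row) and $\widehat{\mathbf{Y}_m}$ ($\mathbf{Y}_m$ padded with a zero column), splits $\mathbf{K}-\mathbf{K}_m = \mathbf{Y}(\mathbf{X}^{\dagger}-\widehat{\mathbf{X}_m}^{\dagger}) + (\mathbf{Y}-\widehat{\mathbf{Y}_m})\widehat{\mathbf{X}_m}^{\dagger}$, and bounds the two pieces separately; the second piece is precisely the source of the additive term $\norm{\mathbf{v}}_2^2/\sigma_{\min}^2(\mathbf{X}_m)$ that you correctly identify as slack. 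Your route instead keeps the exact rank-one representation $\mathbf{K}-\mathbf{K}_m = -c\,(\mathbf{Y}_m\mathbf{X}_m^{\dagger}\mathbf{u}+\mathbf{v})((\mathbf{I}-\mathbf{X}_m\mathbf{X}_m^{\dagger})\mathbf{u})^T$ and bounds its two vector factors, using the stacked-vector identity $\mathbf{Y}_m\mathbf{X}_m^{\dagger}\mathbf{u}+\mathbf{v}=\mathbf{Y}\mathbf{z}$ with $\mathbf{z}=[\mathbf{X}_m^{\dagger}\mathbf{u};\,1]$ together with Weyl's inequality to keep $\sigma_{\max}(\mathbf{Y}_m)$ and $\norm{\mathbf{v}}_2$ coupled. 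This buys two things. First, a strictly tighter estimate: the stated bound follows from yours by adding the nonnegative slack term, and in the orthogonal special case your computation actually gives the exact value $\norm{\mathbf{v}}_2/\norm{\mathbf{u}}_2$, which trivially implies the paper's tightened bound (itself just the general bound with $c=1/\norm{\mathbf{u}}_2^2$ substituted). Second, it sidesteps a technical blemish in the paper's own argument, which applies the triangle inequality at the level of \emph{squared} norms (the step $\norm{a+b}_2^2 \le \norm{a}_2^2+\norm{b}_2^2$ is not valid in general without a cross term or a factor of two); your single-product bound for a rank-one matrix requires no such step. What the paper's version buys in exchange is modularity --- it isolates the perturbation of $\mathbf{X}^{\dagger}$ from the perturbation of $\mathbf{Y}$, in structural parallel with the rank-truncation proposition --- but your argument is both cleaner and sharper, and fully establishes the proposition as stated.
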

\begin{proof}
\begin{eqnarray*}
    \norm{\mathbf{K} - \mathbf{K}_m}_2^2 = 
    \norm{\mathbf{YX}^{\dagger} - \mathbf{Y}_m\mathbf{X}_m^{\dagger}}_2^2 = 
    \norm{\mathbf{Y}\mathbf{X}^{\dagger} - \mathbf{Y}\widehat{\mathbf{X}_m}^{\dagger} + \mathbf{Y}\widehat{\mathbf{X}_m}^{\dagger} - \widehat{\mathbf{Y}_m}\widehat{\mathbf{X}_m}^{\dagger}}_2^2
\end{eqnarray*} where we define:
\begin{equation}
    \widehat{\mathbf{X}_m}^{\dagger} := 
    \begin{bmatrix}
        \mathbf{X}_m^{\dagger} \\
        \mathbf{0}_{1\times N}
    \end{bmatrix} \in \mathbb{R}^{(m+1)\times N}, 
    \widehat{\mathbf{Y}_m} = 
    \begin{bmatrix}
        \mathbf{Y}_m & \mathbf{0}_{N\times 1}
    \end{bmatrix}\in \mathbb{R}^{N\times (m+1)}
\end{equation} then by triangle inequality:
\begin{eqnarray*}
    \norm{\mathbf{K} - \mathbf{K}_m}_2^2 \le \norm{\mathbf{Y}}_2^2\norm{\mathbf{X}^{\dagger}-\widehat{\mathbf{X}_m}^{\dagger}}_2^2 + 
    \norm{\widehat{\mathbf{X}_m}^{\dagger}}_2^2\norm{\mathbf{Y} - \widehat{\mathbf{Y}_m}}_2^2
\end{eqnarray*} where $\norm{\mathbf{X}^{\dagger}-\widehat{\mathbf{X}_m}^{\dagger}}_2$ needs to be further bounded. Using Lemma~\ref{eqn:updating-pseudoinverse}, we have:
\begin{equation}
    \mathbf{X}^{\dagger}-\widehat{\mathbf{X}_m}^{\dagger} = 
    - 
    c
    \begin{bmatrix}
        \mathbf{X}_m^{\dagger}\mathbf{u}\\
        1
    \end{bmatrix}((\mathbf{I} - \mathbf{X}_m\mathbf{X}_m^{\dagger})\mathbf{u})^T
\end{equation} Furthermore, we have:
\begin{equation}
    \Bigg\lvert\Bigg\lvert
        \begin{bmatrix}
            \mathbf{X}_m^{\dagger}\mathbf{u} \\
            1
        \end{bmatrix}
    \Bigg\rvert\Bigg\rvert_2^2 \le 1 + \frac{\norm{\mathbf{u}}_2^2}{\sigma^2_{min}(\mathbf{X}_m)}
\end{equation} and as a projection matrix:
\begin{equation}
    \norm{\mathbf{I} - \mathbf{X}_m\mathbf{X}_m^{\dagger}}_2^2 \le 1
\end{equation} 

Then we may conclude:
\begin{equation}
    \norm{\mathbf{X}^{\dagger}-\widehat{\mathbf{X}_m}^{\dagger}}_2^2 \le c^2\norm{\mathbf{u}}_2^2\bigg(
        1 + \frac{\norm{\mathbf{u}}_2^2}{\sigma^2_{min}(\mathbf{X}_m)}
    \bigg)
\end{equation} 

Putting everything together, we conclude that:
\begin{equation}
    \norm{\mathbf{K - \mathbf{K}}_m}_2^2 \le 
    c^2\norm{\mathbf{u}}_2^2\bigg(
        1 + \frac{\norm{\mathbf{u}}_2^2}{\sigma^2_{min}(\mathbf{X}_m)}
    \bigg)(\sigma_{max}^2(\mathbf{Y}_m) + \norm{\mathbf{v}}_2^2) +
    \frac{\norm{\mathbf{v}}_2^2}{\sigma_{min}^2(\mathbf{X}_m)}
\end{equation} 

Under the assumption of $\mathbf{u}$ being orthogonal to $\text{range}(\mathbf{X}_m)$, the last conclusion follows by the reduction of lower bound for $c$ presented in Lemma~\ref{eqn:updating-pseudoinverse}.
\end{proof}

\begin{figure}
    \centering
    \includegraphics[width=8cm]{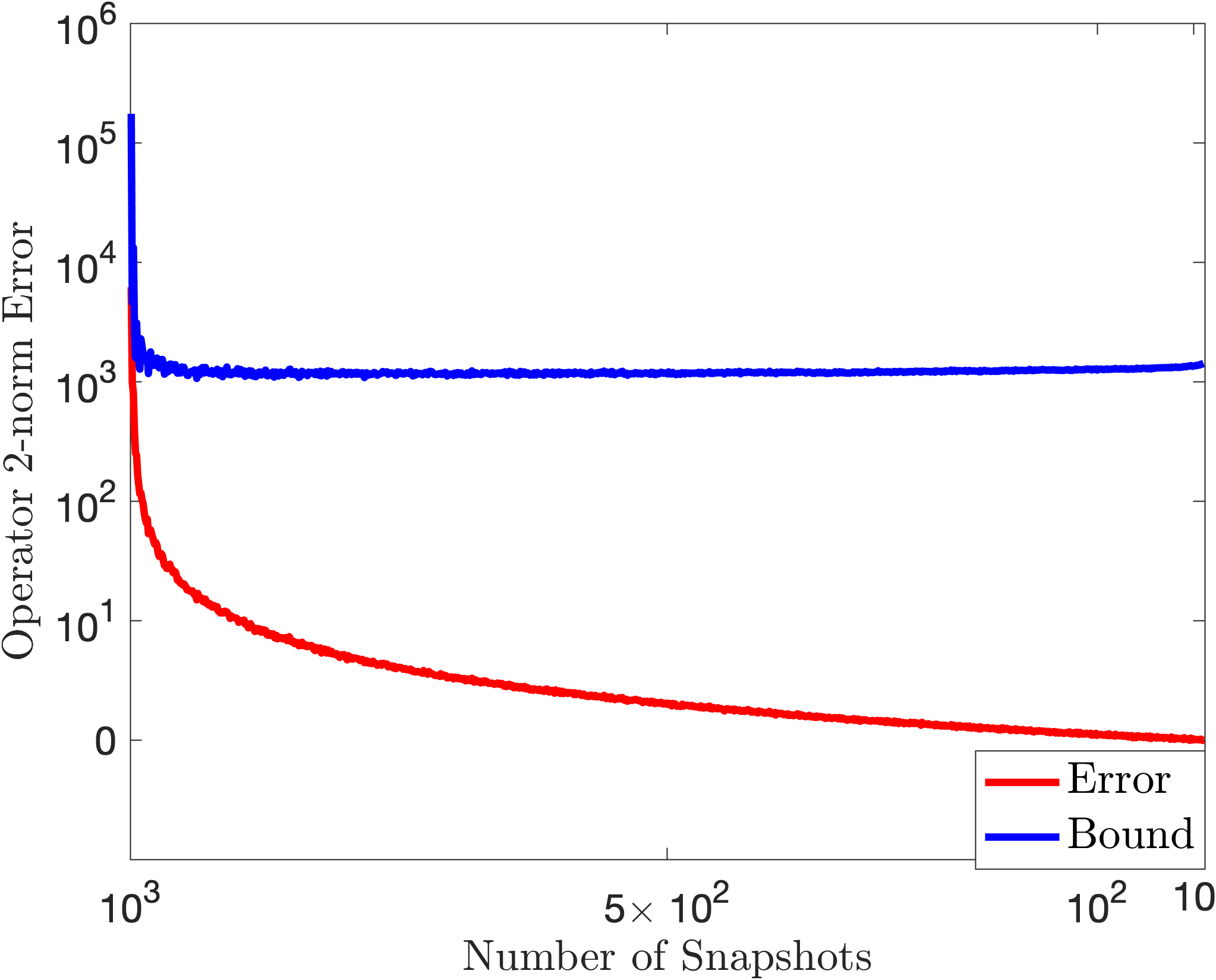}
    \caption{Operator norm error bound (\ref{eqn:op-norm-bound-truncating-column}) under deletion of most recent observation, for random Gaussian data matrices. The comparison of true operator norm error and upper bounds are averaged over 10 seeds. }
    \label{fig:delete-column-bound}
\end{figure}

Figure~\ref{fig:delete-column-bound} provides a verification of the bound in Theorem~\ref{eqn:op-norm-bound-truncating-column} using random Gaussian matrices, averaged over 10 random seeds. The results obtained in Theorem~\ref{eqn:op-norm-bound-truncating-rank} and Theorem~\ref{eqn:op-norm-bound-truncating-column} only rely on general linear algebra operations. 

With explicit form of the dynamical system, such as the system in equation (\ref{eqn:time-dep-linear-system-with-force}), more insights can be gained by leveraging the dependence of time-shifted data matrix $\mathbf{Y}$ on $\mathbf{X}$ via the flow map $\pphi_{\Delta t}$, as we now present in the following proposition:

\begin{proposition}\label{lemma:time-shift-norm-bounds} (Time-shift norm upper bound, for system (\ref{eqn:time-dep-linear-system-with-force})) Assume that $\mathbf{C}(t)$ is diagonalizable for all $t$, and $\mathbf{C}(t)$, $\mathbf{f}(t)$ are piecewise continuous on all intervals $[t_0,t_1],\ldots, [t_{m-1}, t_m]$. Then we have that the norm of $\mathbf{Y}$ is connected with the norm of $\mathbf{X}$ as the following, with $f,\gamma$ defined in equation (\ref{eqn:define-gamma}):
\[
    \norm{\mathbf{Y}}_2
    \le 
    \exp\bigg(\frac12\gamma^2\Delta t\bigg)\sqrt{\frac{mf^2}{\gamma^2} +
    \sum_{i=1}^{m}\sigma_i^2(\mathbf{X})}
\]
\end{proposition}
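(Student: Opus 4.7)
The plan is to bound $\norm{\mathbf{Y}}_2$ by first majorizing it with the Frobenius norm, $\norm{\mathbf{Y}}_2 \le \norm{\mathbf{Y}}_F = (\sum_{i=1}^m \norm{\mathbf{u}_{i+1}}_2^2)^{1/2}$. This reduction is natural here because the target bound already involves $\sum_{i=1}^m \sigma_i^2(\mathbf{X}) = \norm{\mathbf{X}}_F^2$, so passing through the Frobenius norm converts a spectral estimate into a column-wise sum that matches the sum of singular values exactly.

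Next, I would express each column of $\mathbf{Y}$ in terms of the corresponding column of $\mathbf{X}$ via the explicit solution formula stated above the proposition. Setting
\[
    \mathbf{M}_i := \exp\!\Bigl(\textstyle\int_{t_i}^{t_{i+1}} \mathbf{C}(s)\, ds\Bigr), \qquad
    \mathbf{r}_i := \int_{t_i}^{t_{i+1}} \exp\!\Bigl(\textstyle\int_s^{t_{i+1}} \mathbf{C}(\tau)\, d\tau\Bigr) \mathbf{f}(s)\, ds,
\]
one obtains the one-step recursion $\mathbf{u}_{i+1} = \mathbf{M}_i \mathbf{u}_i + \mathbf{r}_i$. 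Diagonalizability of $\mathbf{C}(t)$ together with the definition of $\gamma$ in equation (\ref{eqn:define-gamma}) then controls $\norm{\mathbf{M}_i}_2$, while the piecewise continuity of $\mathbf{f}$ and the definition of $f$ controls $\norm{\mathbf{r}_i}_2$ uniformly in $i$.

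Then I would apply a Young-type splitting $(a+b)^2 \le (1+\varepsilon)\,a^2 + (1+\varepsilon^{-1})\,b^2$ to each $\norm{\mathbf{u}_{i+1}}_2^2 = \norm{\mathbf{M}_i \mathbf{u}_i + \mathbf{r}_i}_2^2$, tuning $\varepsilon$ so that the combined factor multiplying $\norm{\mathbf{u}_i}_2^2$ coincides with $\exp(\gamma^2 \Delta t)$. Summing over $i = 1, \ldots, m$ and invoking the identity $\sum_{i=1}^m \norm{\mathbf{u}_i}_2^2 = \norm{\mathbf{X}}_F^2 = \sum_{i=1}^m \sigma_i^2(\mathbf{X})$ on the data term while the $\mathbf{r}_i$ contributions aggregate into the $mf^2/\gamma^2$ forcing term, should produce
\[
    \norm{\mathbf{Y}}_F^2 \le \exp(\gamma^2 \Delta t) \bigg(\frac{m f^2}{\gamma^2} + \sum_{i=1}^m \sigma_i^2(\mathbf{X})\bigg).
\]
A final square root, together with $\norm{\mathbf{Y}}_2 \le \norm{\mathbf{Y}}_F$, yields the stated inequality.

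The main obstacle I anticipate is producing the exact exponential factor $\exp(\tfrac12 \gamma^2 \Delta t)$ instead of the more routine $\exp(\gamma \Delta t)$: this forces the Young parameter $\varepsilon$ and the propagator bound to be balanced in a specific way that almost certainly relies on the precise form of $\gamma$ in (\ref{eqn:define-gamma}); a careless triangle-inequality pass would give an inferior exponent. A secondary technical point is that the closed-form solution written above the proposition implicitly requires $\mathbf{C}(t)$ to commute with $\mathbf{C}(s)$, so I would either verify that this commutativity is baked into the diagonalizability hypothesis (e.g.\ via a common eigenbasis) or, failing that, replace $\mathbf{M}_i$ by the time-ordered propagator and check that the same operator-norm bound in terms of $\gamma$ still applies.
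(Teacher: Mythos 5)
Your overall architecture coincides with the paper's: both arguments pass to a column-wise (Frobenius-type) sum, use the variation-of-constants recursion $\mathbf{y}_i=\mathbf{M}_i\mathbf{x}_i+\mathbf{r}_i$ with $\mathbf{M}_i$ the propagator and $\mathbf{r}_i$ the forcing integral, control $\norm{\mathbf{M}_i}_2$ through $\gamma$ and $\norm{\mathbf{r}_i}_2$ through $f$ and $\gamma$, and identify $\sum_i\sigma_i^2(\mathbf{X})$ with $\norm{\mathbf{X}}_F^2$. The gap sits exactly where you flagged it: the Young-splitting step cannot be tuned to yield the stated constant. The propagator estimate used in the paper is $\norm{\mathbf{M}_i}_2^2\le\exp(\gamma_i^2\Delta t)\le\exp(\gamma^2\Delta t)$, so after Young's inequality the coefficient on $\norm{\mathbf{u}_i}_2^2$ is $(1+\varepsilon)\norm{\mathbf{M}_i}_2^2$; forcing this to equal $\exp(\gamma^2\Delta t)$ when the propagator bound is saturated requires $\varepsilon=0$, at which point $(1+\varepsilon^{-1})$ diverges and the forcing term is lost. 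No $\varepsilon>0$ reproduces the proposition's right-hand side by this route.

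The paper avoids the balancing problem altogether: it bounds the propagator contribution and the forcing contribution separately, and the essential observation is that the forcing bound
$\norm{\mathbf{r}_i}_2^2\le f_i^2\int_{t_i}^{t_i+\Delta t}\exp\bigl(\gamma_i^2(t_i+\Delta t-s)\bigr)\,ds=\frac{f_i^2}{\gamma_i^2}\bigl(\exp(\gamma_i^2\Delta t)-1\bigr)\le\frac{f^2}{\gamma^2}\exp(\gamma^2\Delta t)$
already carries the same exponential factor as the propagator term (using that $(e^{x\Delta t}-1)/x$ is increasing in $x$), so the two pieces combine directly into $\exp(\gamma^2\Delta t)\bigl(mf^2/\gamma^2+\sum_i\sigma_i^2(\mathbf{X})\bigr)$ with nothing to tune. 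Replacing your Young step with this separate treatment of the two terms recovers the paper's argument. Your secondary concern about time-ordering of the propagator when $\mathbf{C}(t)$ and $\mathbf{C}(s)$ do not commute is well taken, but the paper does not address it either --- it works directly with $\exp\bigl(\int\mathbf{C}\bigr)$ --- so that is not a defect of your proposal relative to the paper's own proof.
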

\begin{proof}
For convenience of notations, define the time-dependent matrix:
\begin{equation}
    \mathbf{M}_{\Delta t}^{(i)} = \mathbf{M}_{\Delta t}(t_i) := \exp\bigg(
        \int_{t_i}^{t_i+\Delta t}\mathbf{C}(s)ds
    \bigg)
\end{equation} and the time-dependent vector:
\begin{equation}
    \mathbf{g}_{\Delta t}^{(i)} = \mathbf{g}_{\Delta t}(t_i) = \int_{t_i}^{t_i+\Delta t}\exp\bigg(
        \int_s^{t_i+\Delta t}\mathbf{C}(\tau)d\tau
    \bigg)\mathbf{f}(s)ds
\end{equation} then we have by the explicit flow map (\ref{eqn:time-dep-linear-system-with-force}) that:
\begin{equation}\label{eqn:recurrence}
    \mathbf{v} = 
    \mathbf{M}_{\Delta t}^{(m+1)}\mathbf{u} + 
    \mathbf{g}_{\Delta t}^{(m+1)}
\end{equation} 

Iteratively applying the recurrence (\ref{eqn:recurrence}) to $\mathbf{Y}$, we have the explicit dependence for each column $1\le i\le m$:
\begin{equation}
    \mathbf{y}_i = \mathbf{M}^{(i)}_{\Delta t}\mathbf{x}_i + \mathbf{g}_{\Delta t}^{(i)}
\end{equation} and therefore:
\begin{equation}
    \mathbf{Y} = 
    \begin{bmatrix}
        \vline & \vline & \cdots & \vline \\
        \mathbf{M}_{\Delta t}^{(1)}\mathbf{x}_1+\mathbf{g}_{\Delta t}^{(1)} & \mathbf{M}_{\Delta t}^{(2)}\mathbf{x}_{2}+\mathbf{g}_{\Delta t}^{(2)} & \cdots & \mathbf{M}_{\Delta t}^{(m)}\mathbf{x}_m + \mathbf{g}_{\Delta t}^{(m)} \\
        \vline & \vline & \cdots & \vline
    \end{bmatrix}
\end{equation}

Then we have:
\begin{equation}
    \normtwo{\mathbf{\mathbf{Y}}}_{2}^2 \le 
    \normtwo{
    \begin{bmatrix}
        \mathbf{M}_{\Delta t}^{(1)} & & & \\
        & \mathbf{M}_{\Delta t}^{(2)} & & \\
        & & \ddots & \\
        & & & \mathbf{M}_{\Delta t}^{(m)}
    \end{bmatrix}
    }^2_2\normtwo{\mathbf{X}}_F^2 + 
    \sum_{i=1}^m\normtwo{\mathbf{g}_{\Delta t}^{(i)}}_2^2
\end{equation} To refine the bound, we consider:
\begin{eqnarray*}
    \normtwo{\mathbf{M}_{\Delta t}^{(i)}}_2^2 = 
    \normtwo{
        \exp\bigg(
            \int_{t_i}^{t_i+\Delta t}\mathbf{C}(s)ds
        \bigg)
    }_2^2 \le 
    \exp\bigg(
        \int_{t_i}^{t_i+\Delta t}\normtwo{\mathbf{C}(s)}_2^2ds
    \bigg) 
    \le \exp(\Delta t\gamma_i^2)
\end{eqnarray*} where:
\begin{equation}
    \gamma_i := \max_{t_i\le s\le t_i+\Delta t}\normtwo{\mathbf{C}(s)}_2
\end{equation} 

Under the assumption of piecewise continuity on each $[t_i, t_{i+1}]$, the attainability of $\gamma_i$ is given by considering the spectra of $\mathbf{C}(t)$ as a continuous map of time \cite{DIECI2006502, Bhatia1997}. Furthermore,
\[
    \norm{\mathbf{g}_{\Delta t}^{(i)}}_2^2 = 
    \norm{
        \int_{t_i}^{t_{i}+\Delta t}
        \exp\bigg(
            \int_s^{t_i+\Delta t}\mathbf{C}(\tau)d\tau
        \bigg)\mathbf{f}(s)ds
    }_2^2
\]
\[
    \le f_i^2\int_{t_i}^{t_i+\Delta t}\exp\bigg(
        \gamma_i^2(t_i+\Delta t-s)
    \bigg)ds = \frac{f_i^2}{\gamma_i^2}(\exp(\gamma_i^2\Delta t) - 1)
\] where we define:
\begin{equation}
    f_i := 
    \max_{t_i\le s\le t_{i+1}}\norm{\mathbf{f}(s)}_2
\end{equation} which is attainable due to the piecewise continuous assumption of $\mathbf{f}(t)$. 

Finally, define:
\begin{equation}\label{eqn:define-gamma}
    \gamma := \max_{1\le i\le m}\gamma_i, f := \max_{1\le i\le m}f_i
\end{equation} 

We conclude the following result as desired:
\[
    \norm{\mathbf{Y}}_2^2 
    \le 
    \exp(\gamma^2\Delta t)\sum_{i=1}^{m}\sigma_i^2(\mathbf{X}) + \frac{mf}{\gamma}(\exp(\gamma^2\Delta t)-1) 
    \\
\le \exp(\gamma^2\Delta t)\bigg(\frac{mf^2}{\gamma^2} +
    \sum_{i=1}^{m}\sigma_i^2(\mathbf{X})\bigg)
\]
\end{proof}

\begin{remark}
    In the special case where $\mathbf{C}(t)\equiv\mathbf{C}$ with eigendecomposition $\mathbf{C} = \mathbf{Q}\boldsymbol{\Lambda}\mathbf{Q}^{-1}$ and largest eigenvalue $\lambda_1$, and $\mathbf{f}\equiv\mathbf{0}$. We have that the solution has the form:
    \begin{equation}
        \mathbf{x}(t) = \mathbf{Q}\exp(t\boldsymbol{\Lambda})\mathbf{Q}^{-1}\mathbf{x}_0
    \end{equation} 

Under the same conditions, the upper bound in Proposition~\ref{lemma:time-shift-norm-bounds} can be tightened to:
\[
        \norm{\mathbf{Y}}_2 \le \kappa_2(\mathbf{Q})\exp\big(\lambda_1\Delta t\big)\sigma_{max}(\mathbf{X})
\] where $\kappa_2(\cdot)$ denotes the 2-norm condition number.
\end{remark}

We provide a verification of the upper bounds in Proposition~\ref{lemma:time-shift-norm-bounds} in Figure~\ref{fig:time-shift-upper-bound} using the time-varying linear system of \cite{zhang2017online}, Example 5.2:
\begin{equation} \label{eqn:simple-linear-system}
    \frac{d\mathbf{x}(t)}{dt} = \mathbf{C}(t)\mathbf{x}(t)
\end{equation}
\[
    \mathbf{x}(0) = [1, 0]^T
\] where:
\begin{equation}
    \mathbf{C}(t) = 
    \begin{bmatrix}
        0 & 1+\epsilon t\\
        -1-\epsilon t & 0
    \end{bmatrix}
\end{equation} with $\epsilon = 0.1$ on the temporal domain $t\in [0,1]$, with $\Delta t=10^{-3}$. Furthermore, we also provide the upper bounds for the two advection-dominated examples with the parameter setups described in Section~\ref{eqn:1d-advection} and Section~\ref{eqn:2d-advection}. In particular, the example system (\ref{eqn:time-dep-linear-system-with-force}) is especially useful for the consideration of numerical solutions to the linear PDE (\ref{eqn:variable-advection-diffusion}), where the matrix $\mathbf{C}(t)$ may be seen as the finite difference or finite element stiffness matrix with time-varying coefficients, and $\mathbf{f}(t)$ as the inhomogeneous source term.

\begin{figure}
    \centering
    \includegraphics[width=8cm]{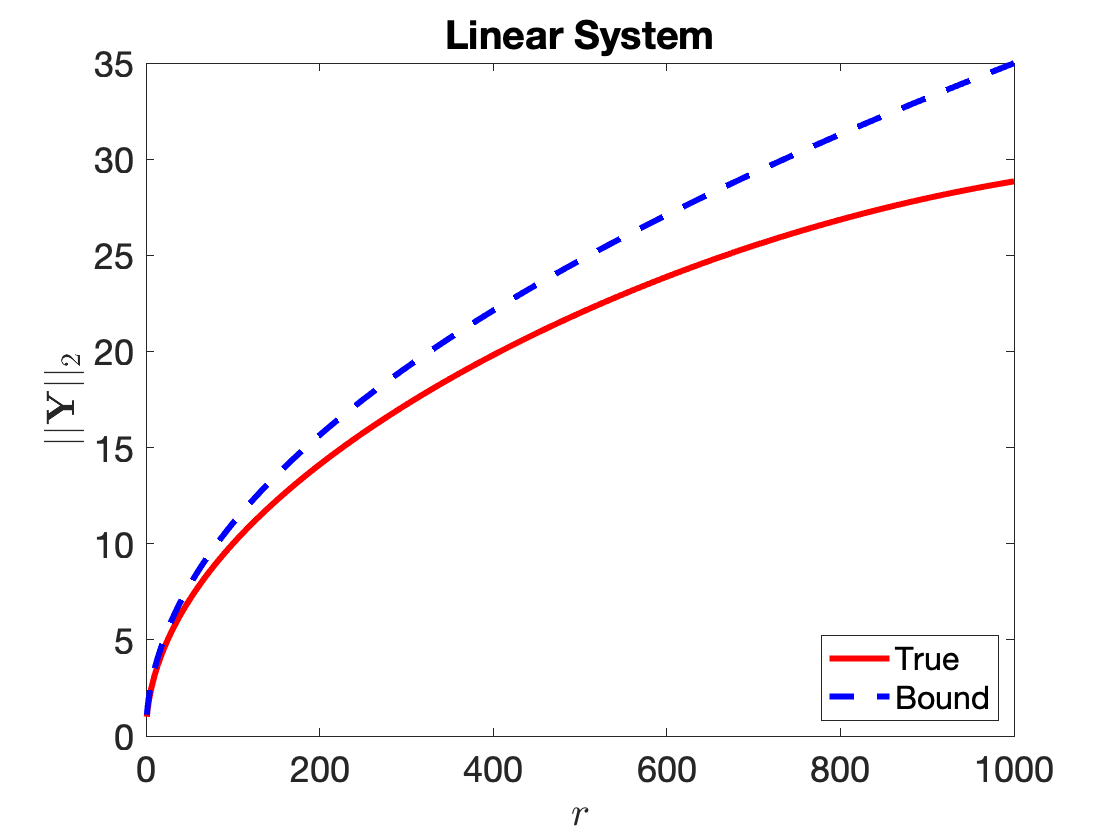}
    \includegraphics[width=8cm]{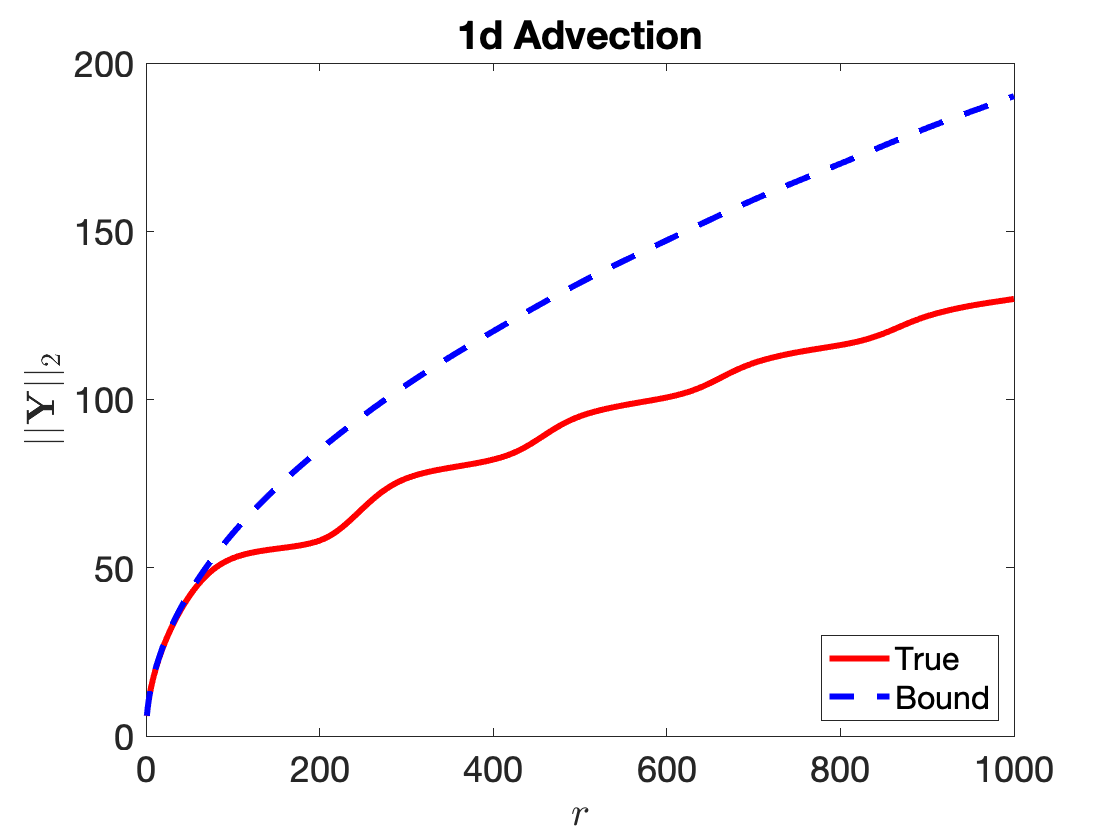}
    \includegraphics[width=8cm]{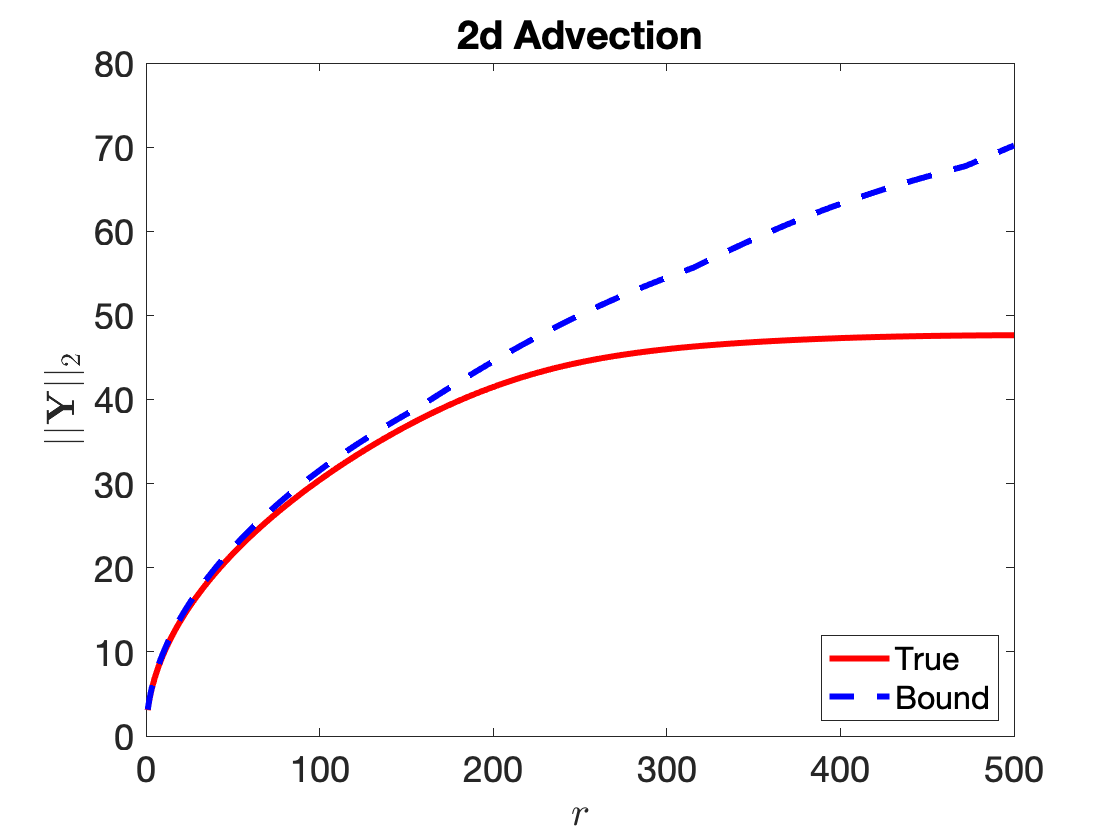}
    \caption{Time shift data matrix 2 norm upper bounds (\ref{lemma:time-shift-norm-bounds}) compared to actual 2 norms, with respect to number of collected snapshots. Top: linear system (\ref{eqn:simple-linear-system}) with $N=2, \Delta t=10^{-3}$. Middle: time-varying advection in 1d (\ref{eqn:advection1d}) with $N=400, \Delta t=0.01$. Bottom: time-varying advection-diffusion in 2d (\ref{eqn:2d-advection-equation}) with $N=2500$ and $\Delta t=0.01$.  }
    \label{fig:time-shift-upper-bound}
\end{figure}

The interpretations of the results obtained in Proposition~\ref{eqn:op-norm-bound-truncating-column} and Proposition~\ref{lemma:time-shift-norm-bounds} are two-folds. In cases where the learning is agnostic of underlying physics (i.e. with data only available as images and the underlying system is unknown), such as the cases considered in \cite{grosek2014dynamic}, perturbations in the DMD operator will strictly be estimable as the perturbation in collected data snapshots alone. However, with additional information of the underlying system, such as (\ref{eqn:time-dep-linear-system-with-force}), one may incorporate physical knowledge and refine the bound by considering columns of $\mathbf{X}, \mathbf{Y}$ as ordered time-shifts of the initial condition along the flow map. Nevertheless, both of the results serve as a priori estimates of operator norm perturbation to help guide the selection of hyperparameters in DMD algorithms. 

\section{Numerical Experiments} \label{sec:numerical-examples} In the following numerical examples, we test the accuracy of Algorithm~\ref{alg:time-dependent-lagrangian-dmd} for a variety of time-varying advection phenomena. In particular, for advection-dominated linear conservation laws (Sections \ref{eqn:1d-advection} and \ref{eqn:2d-advection}), we make the procedure fully data-driven by assuming that the advection velocity in equation (\ref{eqn:lagrangian-pde}) is unknown, and is estimated from tracking the trajectory of the mode. 

Given a temporal discretization $0 = t_0 < t_1 < \ldots, < t_n = t_f$, we measure the performance of DMD algorithms via relative prediction error defined as the following:
\begin{equation} \label{eqn:define-relative-error}
    \epsilon(t) =\frac{\norm{\mathbf{u}_{\text{DMD}}(t) - \mathbf{u}(t)}_2}{\norm{\mathbf{u}(t)}_2}
\end{equation} where $\mathbf{u}, \mathbf{u}_{\text{DMD}}$, are respectively the exact solution and the DMD prediction at time $t$, with the error computed in the $L^2(\mathbb{R}^{d})$ sense. To construct the reduced order model in each experiment, an SVD and projection to POD modes are applied at a prespecified rank determined based on a relative accuracy tolerance level. The exact setup of each numerical simulations is reported separately. 

We first consider the Navier-Stokes equations to test the accuracy of the base time-varying DMD algorithm in reconstructing complex and nonlinear dynamics without Lagrangian information, presented in Section~\ref{eqn:2d-coupled}. For each experiment of Section~\ref{eqn:1d-advection} and \ref{eqn:2d-advection}, we compare four different strategies: the standard DMD and time-varying DMD using only $\mathbf{u}(t)$ as observables, the physics-aware DMD in Section~\ref{sec:lagrangian-dmd-section} without recomputations, and Algorithm~\ref{alg:time-dependent-lagrangian-dmd}. 

\subsection{Incompressible Navier-Stokes equations} \label{eqn:2d-coupled} We consider the flow field of a two-dimensional incompressible fluid with density $\rho = 1$ and dynamic viscocity $\nu = 1/600$ (kg/(m$\cdot$s)). With a rectangular domain $\mathcal{D} = [0,2]\times [0,1]$, the fluid enters from the left boundary with fixed velocity and flows around an impermeable cylinder centered at $\mathbf{x}_{\text{circ}} = [0.3, 0.5]^T$. The dynamics of fluid pressure $p(t,\mathbf{x})$, horizontal velocity component $u(t,\mathbf{x})$ and vertical velocity component $v(t,\mathbf{x})$ follow the Navier-Stokes (NS) equation:
\begin{equation} \label{eqn:define-ns-equation}
    \begin{dcases}
        \frac{\partial u}{\partial t} + u\frac{\partial u}{\partial x} + v\frac{\partial u}{\partial y} = -\frac{1}{\rho}\frac{\partial p}{\partial x} + \nu\bigg(
        \frac{\partial^2 u}{\partial x^2} + \frac{\partial^2 u}{\partial y^2}
    \bigg) \\
        \frac{\partial v}{\partial t} + u\frac{\partial v}{\partial x} + v\frac{\partial v}{\partial y} = -\frac{1}{\rho}\frac{\partial p}{\partial y} + \nu\bigg(
        \frac{\partial^2 v}{\partial x^2} + \frac{\partial^2 v}{\partial y^2}
    \bigg) \\
    \frac{\partial u}{\partial x} + \frac{\partial v}{\partial y} = 0
    \end{dcases} 
\end{equation} subject to the following initial-boundary conditions:
\begin{equation} \label{eqn:navier-ibconditions1}
    p(t, 2, y) = 0, \frac{\partial p}{\partial \mathbf{n}}\big\rvert_{\partial\mathcal{D}\setminus \{x=2\}} = 0, \frac{\partial u(t, 2, y)}{\partial \mathbf{n}} = 0, \frac{\partial v(t, 2, y)}{\partial \mathbf{n}} = 0
\end{equation}
\begin{equation} \label{eqn:navier-ibconditions2}
    u(t, 0, y) = 1, v(t,0,y) = 0, 
\end{equation}
\begin{equation} \label{eqn:navier-ibconditions3}
    u(t, x, 0) = u(t, x, 1)= 0, v(t, x, 0) = v(t, x, 1) = 0
\end{equation}

We define the quantity of interest as the magnitude of our velocity field:
\begin{equation} \label{eqn:2d-navier-qoi}
    w(t, x, y) := \sqrt{u(t,x,y)^2 + v(t,x,y)^2}
\end{equation} and simulate the nonlinear system (\ref{eqn:define-ns-equation}) with a custom MATLAB library. The problem is solved in conservative form with finite difference method on a staggered grid \cite{Danggo_2017}, with discretization levels $\Delta x = \Delta y = 0.02$, and time step size $\Delta t = 0.001$.

Under this setting, we focus on reconstructing the dynamics during formation of vortex street on the time domain $t\in [0, 3.0]$, yielding effective state dimensions $N=5000$ and $m=3000$ snapshots. For each DMD strategy, we set the SVD truncation level to $\epsilon=1.0\times 10^{-2}$. Figures~\ref{fig:2d-navier-compare1} and Figure~\ref{fig:2d-navier-error} shows a comparison of predicted solutions between standard DMD and time-varying DMD along with their relative $L^2$-errors from the reference numerical solution. As expected, standard DMD places an invariant manifold assumption and yields an inaccurate reduced-order model under rapid time changes. The time-varying DMD more accurately represents the solution by updating the operator at different time intervals. Finally, we visualize the dominant frequency variations during the time domain $[0, 2.5]$ and observe that standard DMD begins to accumulate errors after $t=0.05$, failing to capture the rapid frequency changes.

\begin{figure}
    \centering
    \includegraphics[width=4.2cm]{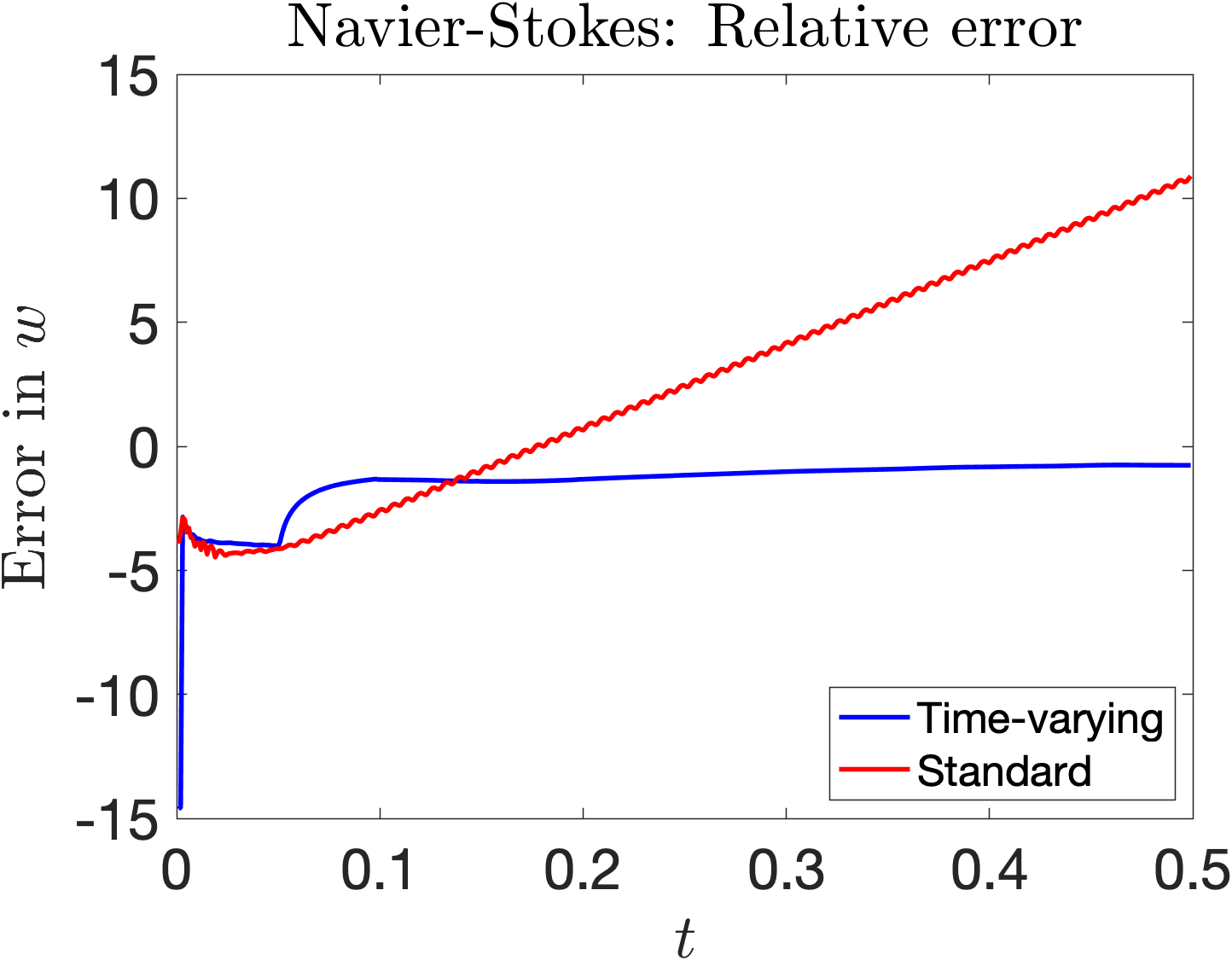}
    \includegraphics[width=4.2cm]{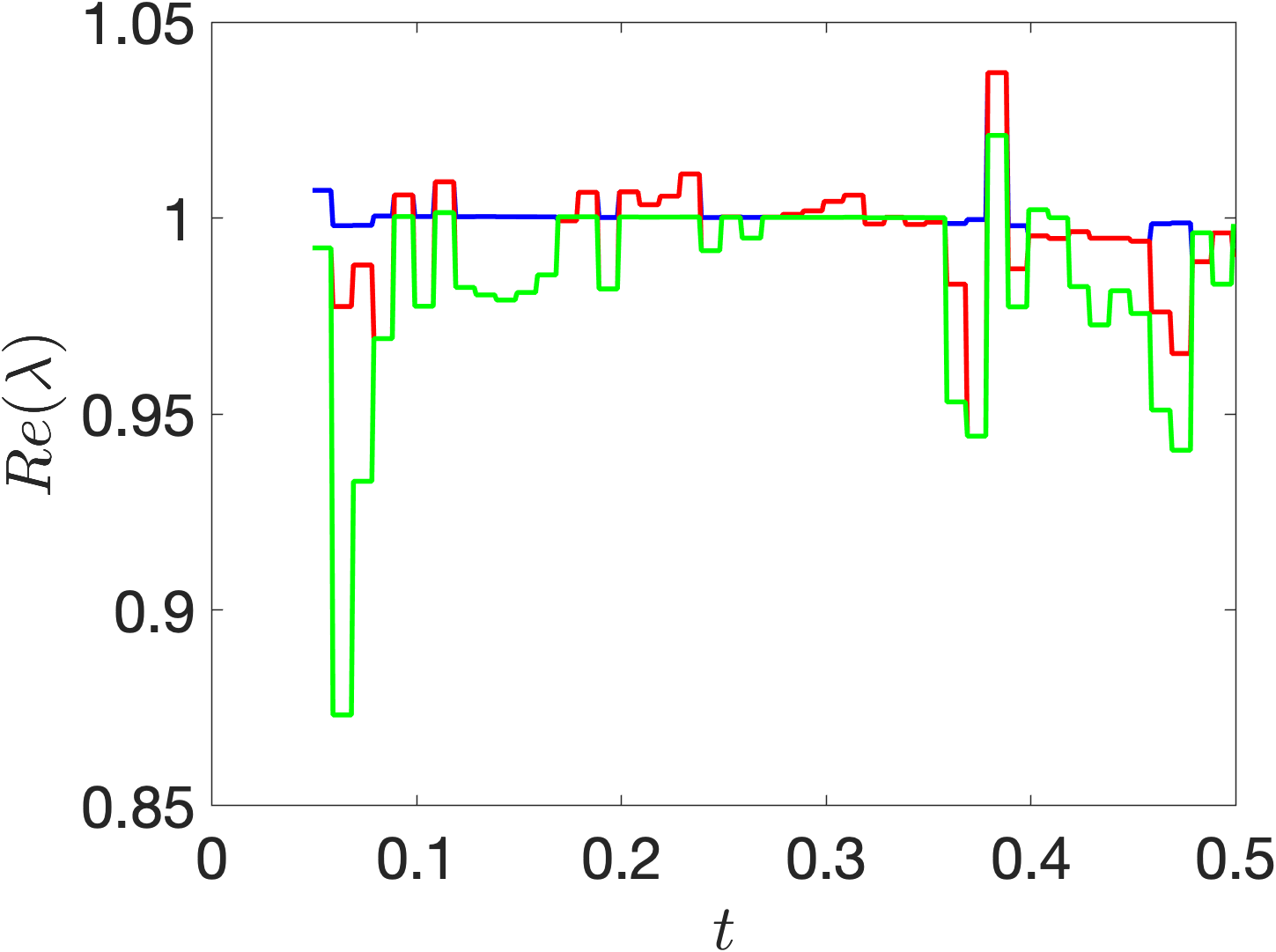}
    \includegraphics[width=4.2cm]{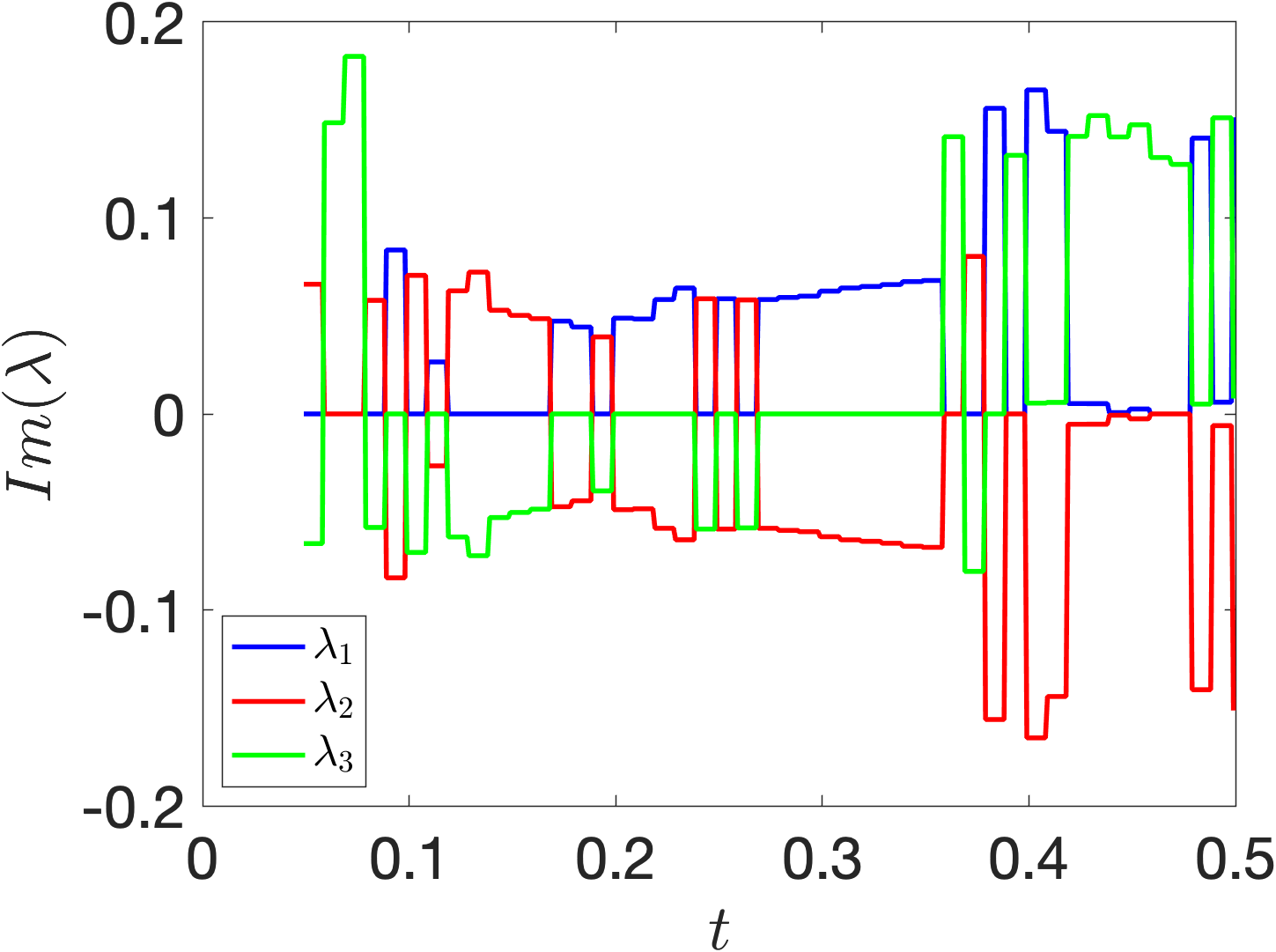}
    \caption{Left: comparison of standard DMD and time-varying DMD in terms of prediction relative errors. Middle: real part of top 3 dominant frequencies, computed from time-varying DMD modes,  as a function of time. Right: imaginary part of top 3 dominant frequencies as a function of time.}
    \label{fig:2d-navier-error}
\end{figure}

\begin{figure}
    \centering
    \includegraphics[width=13cm]{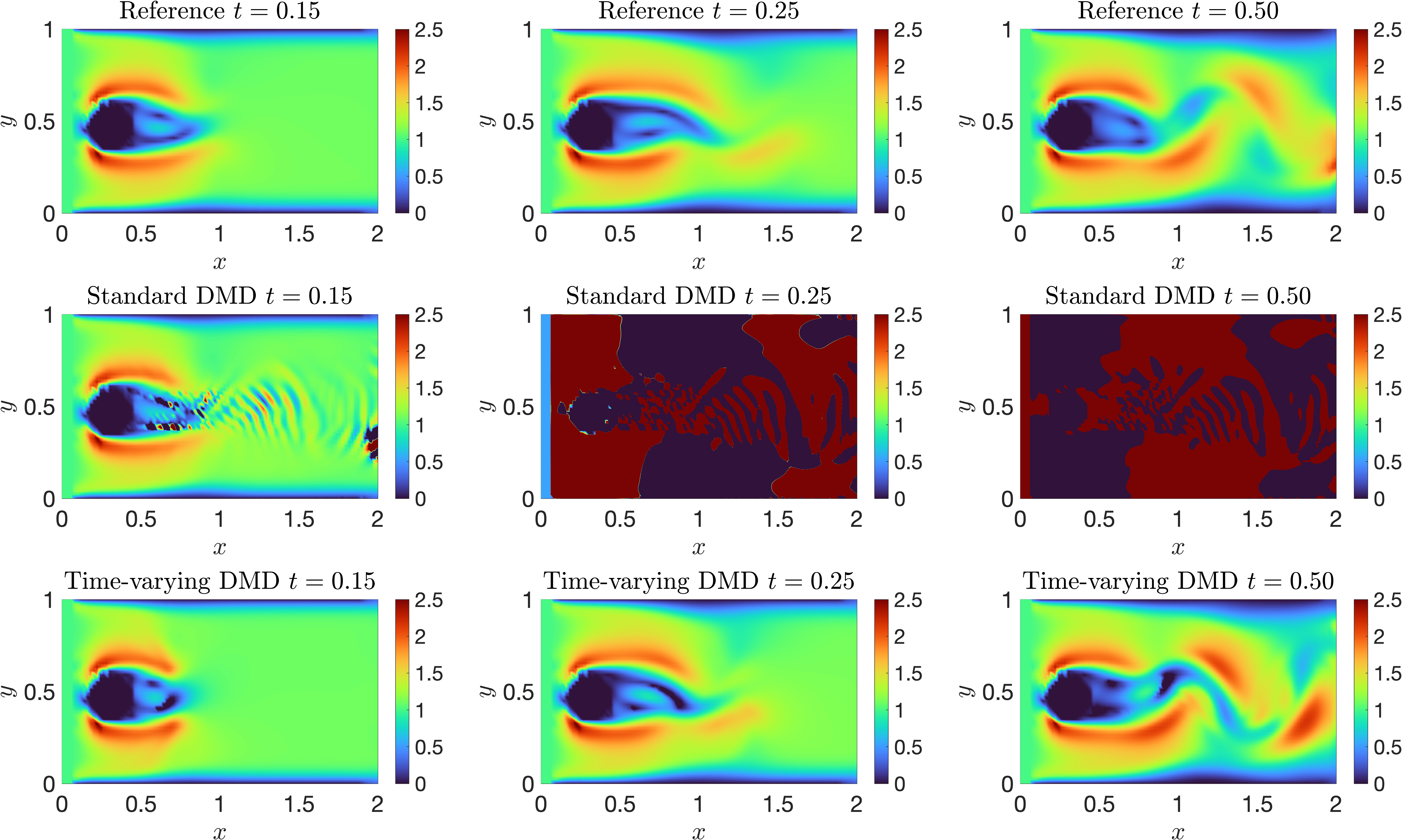}
    \caption{Reconstructed velocity magnitudes to the 2d Navier-Stokes equation (\ref{eqn:2d-navier-qoi}) at time steps $t=0.15, 0.25, 0.5$. Top row: reference solution from high-fidelity simulation. Middle row: standard DMD predictions. Bottom row: time-varying DMD predictions ($r=50$). }
    \label{fig:2d-navier-compare1}
\end{figure}

\subsection{1d time-varying advection} \label{eqn:1d-advection} As a test problem for a comprehensive comparison understanding of standard DMD, time-varying DMD (without Lagrangian information), physics-aware DMD (without temporal updates), and time-varying DMD with Lagrangian moving grid information, we consider the following conservation law under pure advection $(D\equiv 0)$:
\begin{equation}\label{eqn:advection1d}
    \begin{dcases}
        \frac{\partial u}{\partial t} + \frac{\partial}{\partial x}[c\sin(\omega t)u] = 0\\
        u(0, x) = u_0(x) = \exp(-0.5x^2)
    \end{dcases} 
\end{equation} where we choose the advection speed $c\equiv 2$ and frequency $\omega \equiv \pi/2$. The snapshots are simulated using an upwind numerical scheme on a temporal grid of $t\in [0, 8]$ with discretization $\Delta t = 0.01$, yielding $m=800$ training data points. The spatial grid is taken to be $x\in [-10, 10]$ with discretization $\Delta x = 0.05$. By construction, the initial concentration $u_0$ does not change shape, and is advected in a oscillatory manner over time. As a fully data driven model, we consider estimating the velocity as a function of time directly from observations. Figure~\ref{fig:1d-advection-velocity} shows a visualization of the advection velocity as a function of time, estimated from tracking the mode of the solution, defined by viewing the conserved solution $u$ as a density, and computing the average:
\begin{equation} \label{eqn:define-mean}
    \overline{x}(t) := \frac{1}{\int_{x_l}^{x_r}u(t,x)dx}\int_{x_l}^{x_r}xu(t,x)dx
\end{equation} where for (\ref{eqn:1d-advection}), $x_l = -10, x_r = 10$. Then the estimated velocity can be computed using a centered difference of $\overline{x}(t)$ at discrete time points, which was then used as an approximation to the velocity in the Lagrangian reference frame of (\ref{eqn:lagrangian-pde}).
\begin{figure}[h!]
    \centering
    \includegraphics[width=6cm]{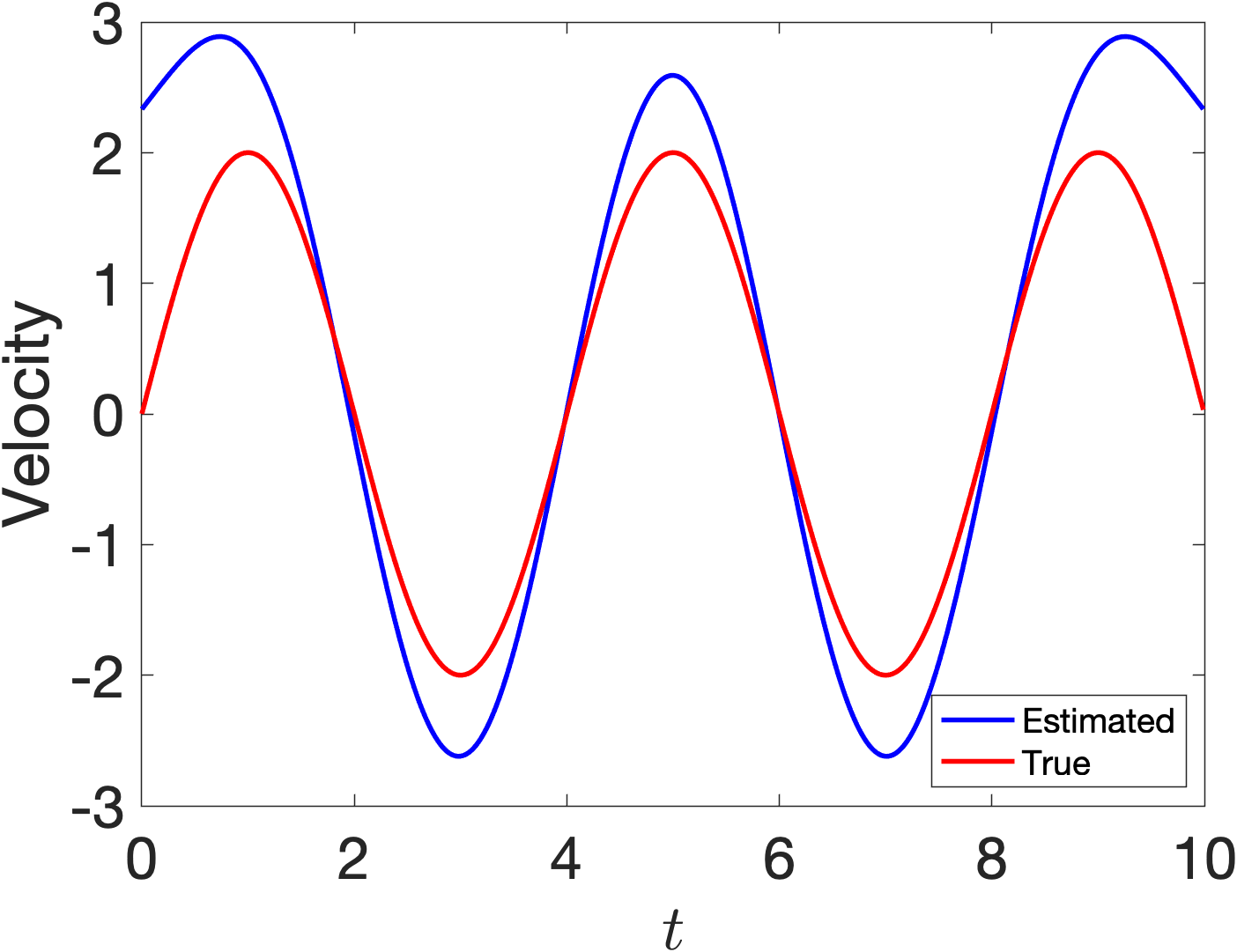}
    \caption{Estimated advection velocity for (\ref{eqn:advection1d}) by tracking the mode of numerical solutions on the time domain $[0, 10]$. }
    \label{fig:1d-advection-velocity}
\end{figure}

We present the predicted solutions, compared with the reference numerical solution, at time steps $t=0, \pi/4, \pi/2, \pi$. In this experiment, we set the tolerance for SVD truncation for all DMD strategies to be $\epsilon=10^{-6}$. Furthermore, for time-varying DMD stratgies, the size of the subintervals are chosen to be $r=5$.

Figures~\ref{fig:1d-adv-standard-dmd}, \ref{fig:1d-adv-standard-incremental-dmd}, \ref{fig:1d-adv-lagrangian-dmd}, and \ref{fig:1d-adv-lagrangian-incremental-dmd} show the behavior of predicted solutions under different DMD strategies. The relative errors are plotted on log scale, presented in Figure~\ref{fig:1d-adv-all-errors}. In particular, we observe increased error fluctuations for time-homogeneous DMD strategies (i.e. standard DMD and physics-ware DMD) at regions of high velocity speed. The advection of the solution mode is also not captured. This is to be expected as standard DMD and physics-aware DMD are assumed to be constant in time, and would incur larger errors where such dependence is stronger. In the case of the time-varying DMD without Lagrangian information, we observe that the modal information is captured and advects through time. However, unphysical oscillations are still present. Out of the tested DMD strategies, Algorithm~\ref{alg:time-dependent-lagrangian-dmd} provides the most faithful reconstruction of the time-varying advection behavior. 

\begin{figure}[h!]
    \centering
    \includegraphics[width=12cm]{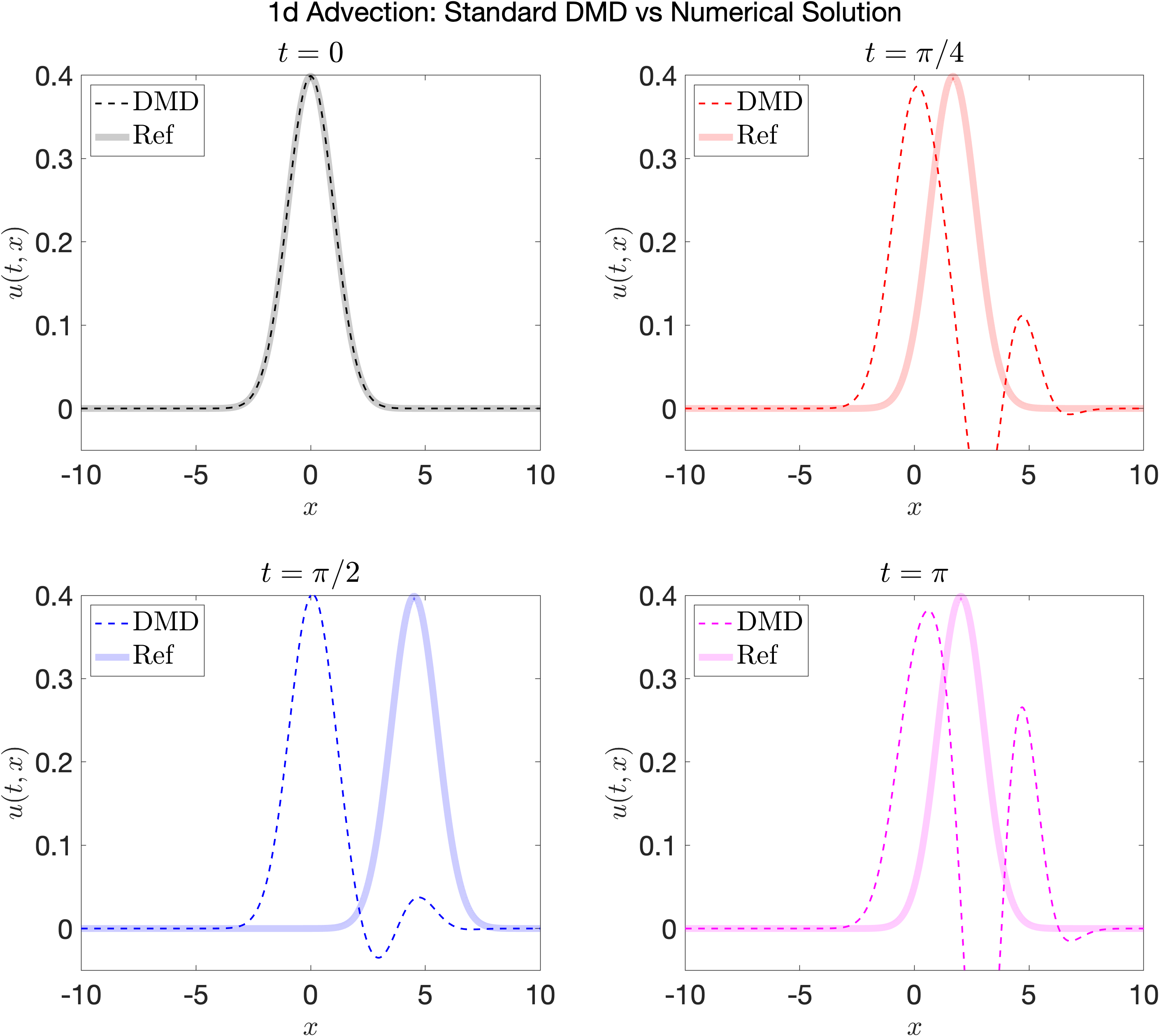}
    \caption{1d time-varying advection: standard DMD predictions at $t=0$, $t=\pi/4$, $t=\pi/2$, $t=\pi$.}
    \label{fig:1d-adv-standard-dmd}
\end{figure}
\begin{figure}[h!]
    \centering
    \includegraphics[width=12cm]{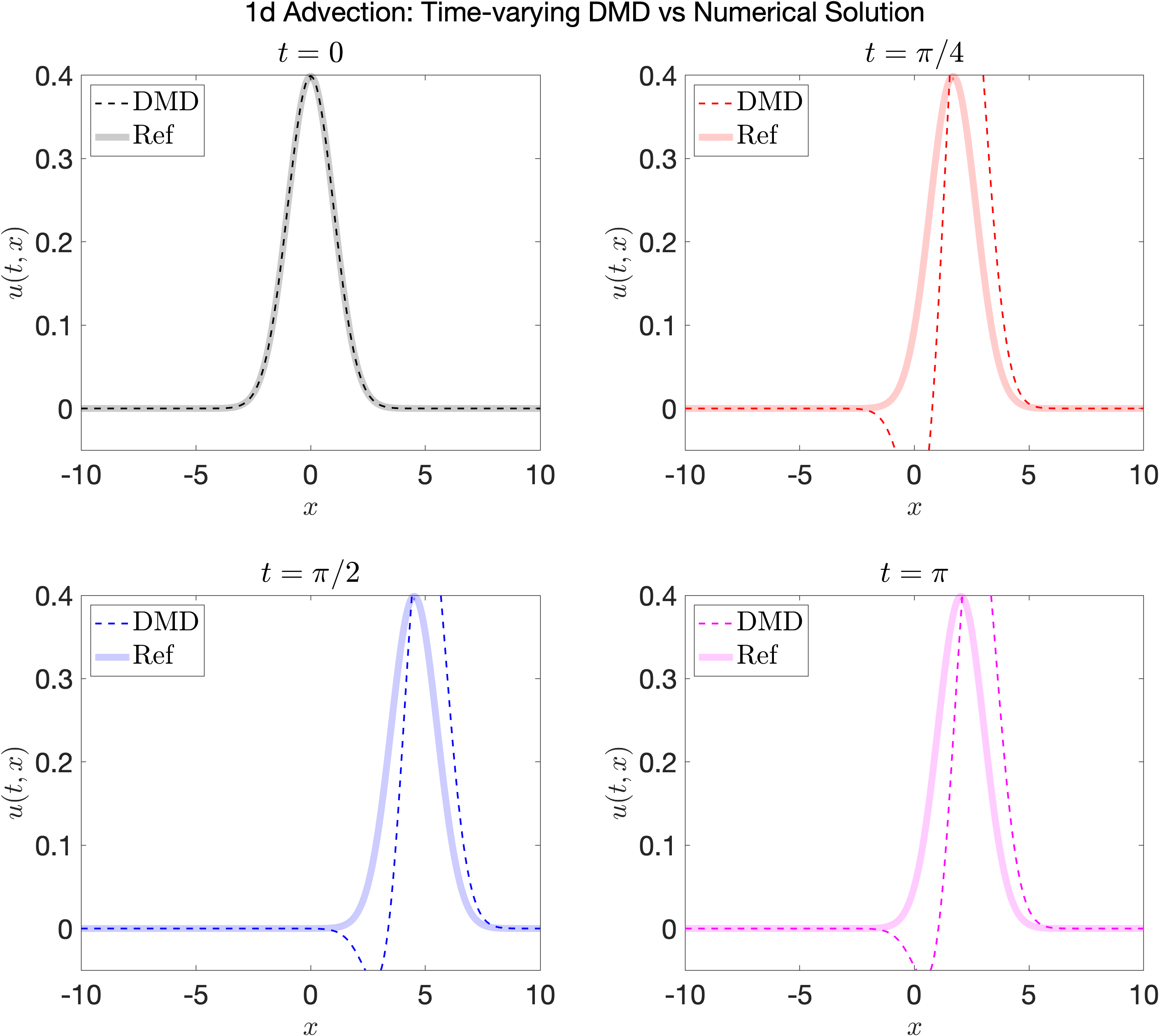}
    \caption{1d time-varying advection: time-varying DMD predictions ($r=5$, without Lagrangian grid), at $t=0$, $t=\pi/4$, $t=\pi/2$, $t=\pi$. }
    \label{fig:1d-adv-standard-incremental-dmd}
\end{figure}
\begin{figure}[h!]
    \centering
    \includegraphics[width=12cm]{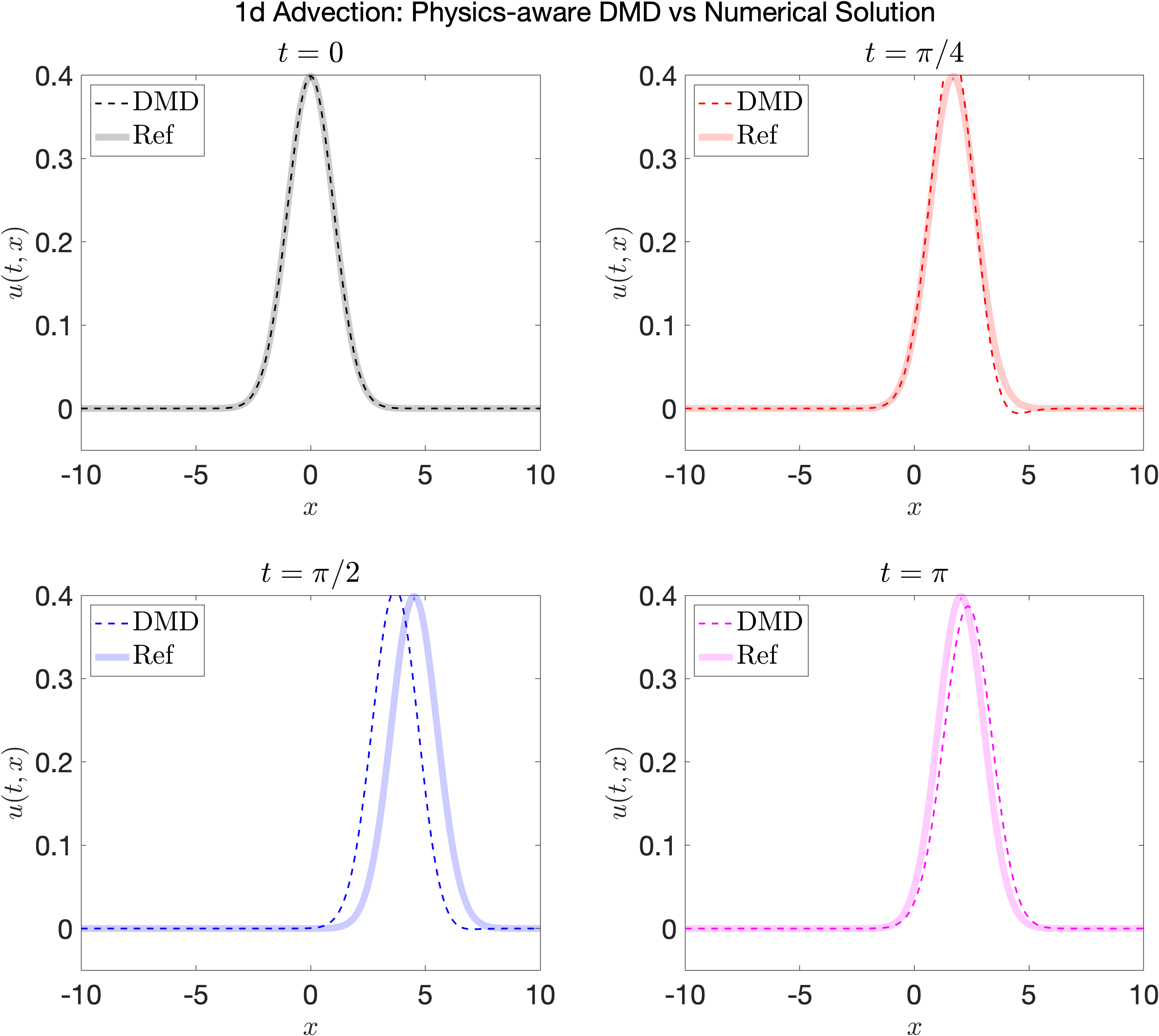}
    \caption{1d time-varying advection: physics-aware DMD predictions at $t=0$, $t=\pi/4$, $t=\pi/2$, $t=\pi$. }
    \label{fig:1d-adv-lagrangian-dmd}
\end{figure}
\begin{figure}[h!]
    \centering
    \includegraphics[width=12cm]{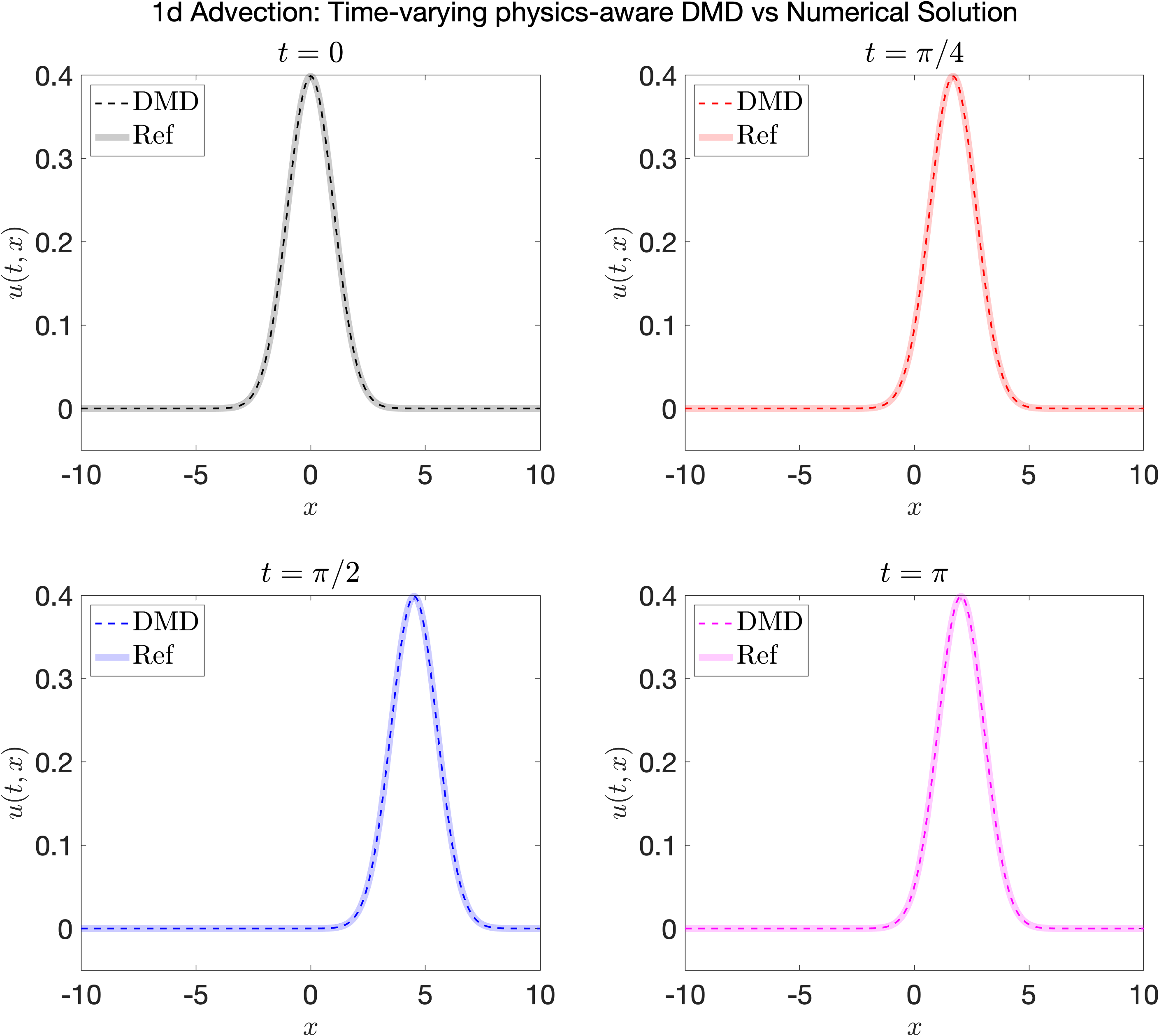}
    \caption{1d time-varying advection: time-varying DMD predictions ($r=5$, with Lagrangian grid), at $t=0$, $t=\pi/4$, $t=\pi/2$, $t=\pi$. }
    \label{fig:1d-adv-lagrangian-incremental-dmd}
\end{figure}
\begin{figure}[h!]
    \centering
    \includegraphics[width=8cm]{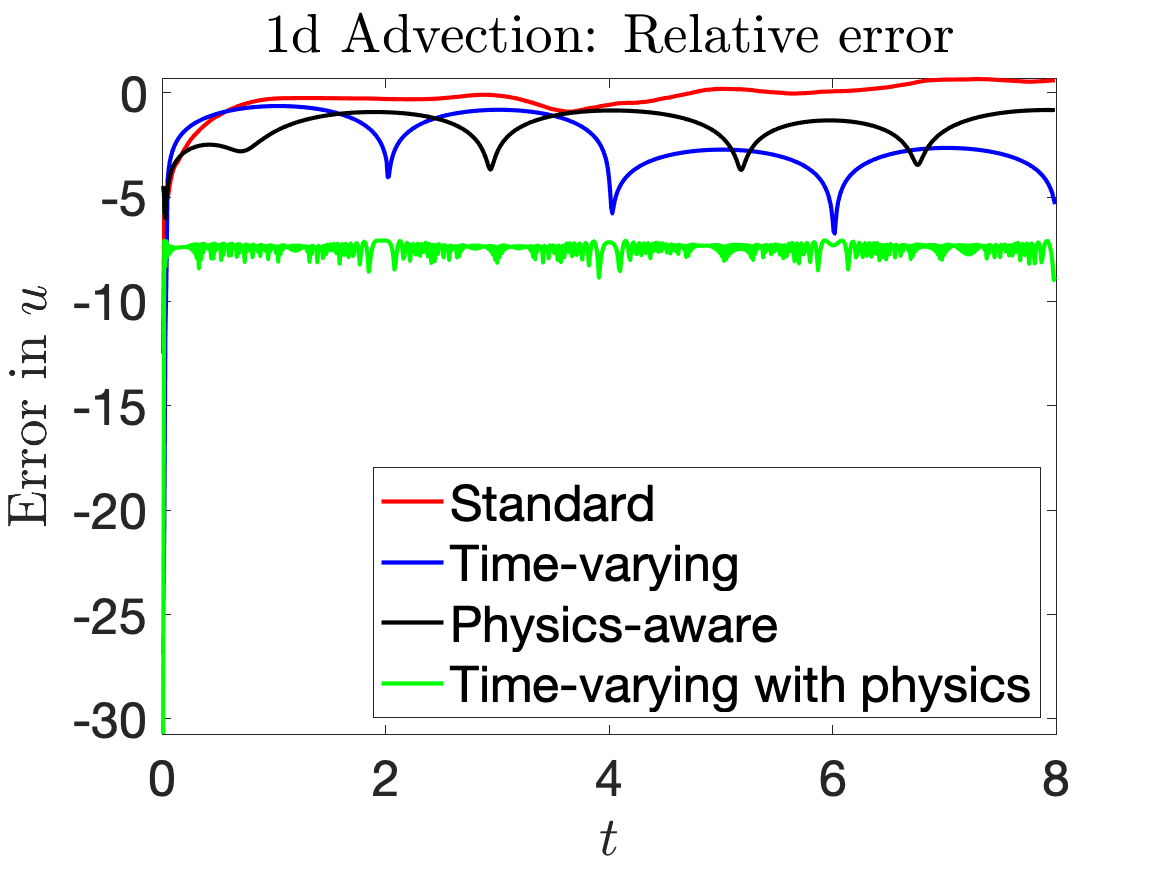}
    \caption{1d time-varying advection: log-scale comparison of $L^2$ relative prediction errors. }
    \label{fig:1d-adv-all-errors}
\end{figure}

\subsection{Advection-dominated Equation in 2d} \label{eqn:2d-advection}  

We consider a two-dimensional linear advection-diffusion equation with time-varying velocity components, defined on the spatio-temporal domain: $(t, x, y) \in [0, 10]\times [-10, 10]\times [-10, 10]$.
\begin{equation} \label{eqn:2d-advection-equation}
    \begin{dcases}
        \frac{\partial u}{\partial t} + v_x(t)\frac{\partial u}{\partial x} + v_y(t)\frac{\partial u}{\partial y} = D\bigg(
            \frac{\partial^2u}{\partial x^2} + \frac{\partial^2u}{\partial y^2}
        \bigg) \\
        u(0, x, y) = \exp(-(x^2 + y^2)) \\
        v_x(t) = \frac12\cos(t), v_y = -\frac{2}{5}\sin(t), D \equiv 0.001
    \end{dcases}    
\end{equation} 

In this example, we let the number of spatial nodes in each direction be $N_x = N_y = 50$. The temporal domain is discretized with a step size of $\Delta t = 0.01$. The PDE is numerically solved using a modified centered time, centered space method (Du-Fort Frankel method) presented in \cite{Hutomo_2019}. The above discretization yields state dimension $N=2500$ and number of snapshots $M = 1000$. 

\begin{figure}[hbt!]
    \centering
    \includegraphics[width=12cm]{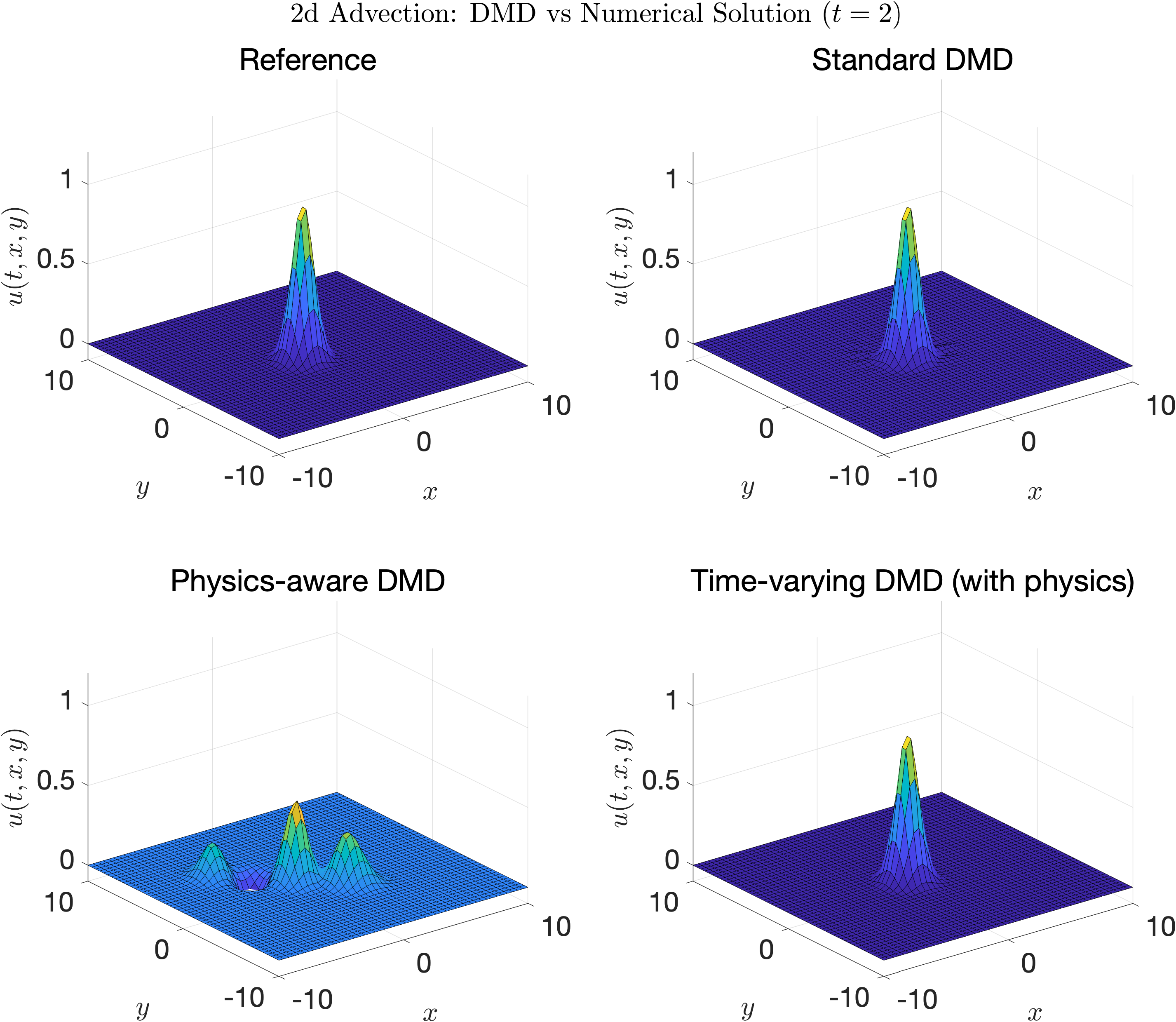}
    \caption{2d time-varying advection-diffusion: comparison of DMD predictions at $t=2$.}
    \label{fig:2d-adv-sol1}
\end{figure}
\begin{figure}[hbt!]
    \centering
    \includegraphics[width=12cm]{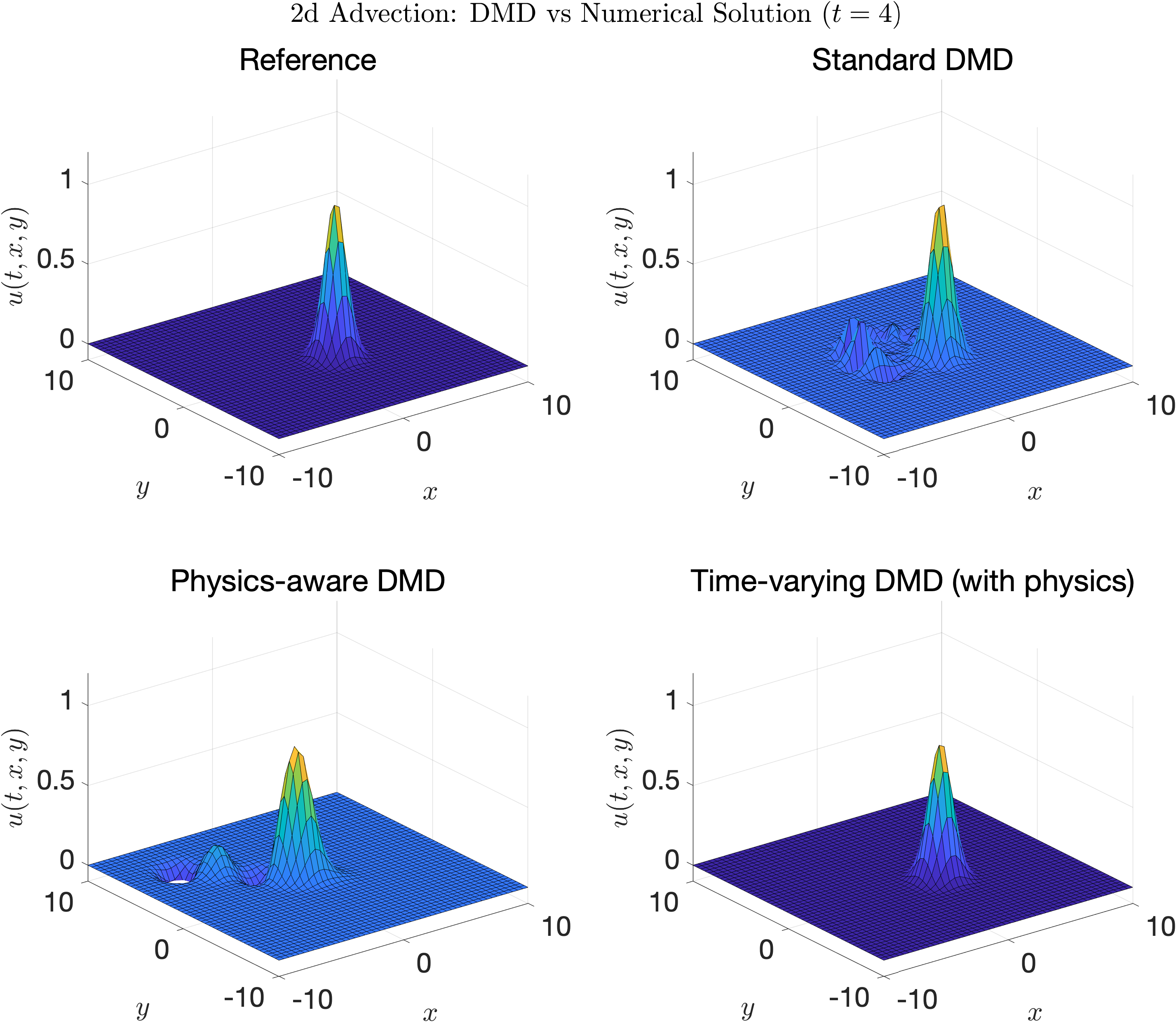}
    \caption{2d time-varying advection-diffusion: comparison of DMD predictions at $t=4$.}
    \label{fig:2d-adv-sol2}
\end{figure}
\begin{figure}[hbt!]
    \centering
    \includegraphics[width=8cm]{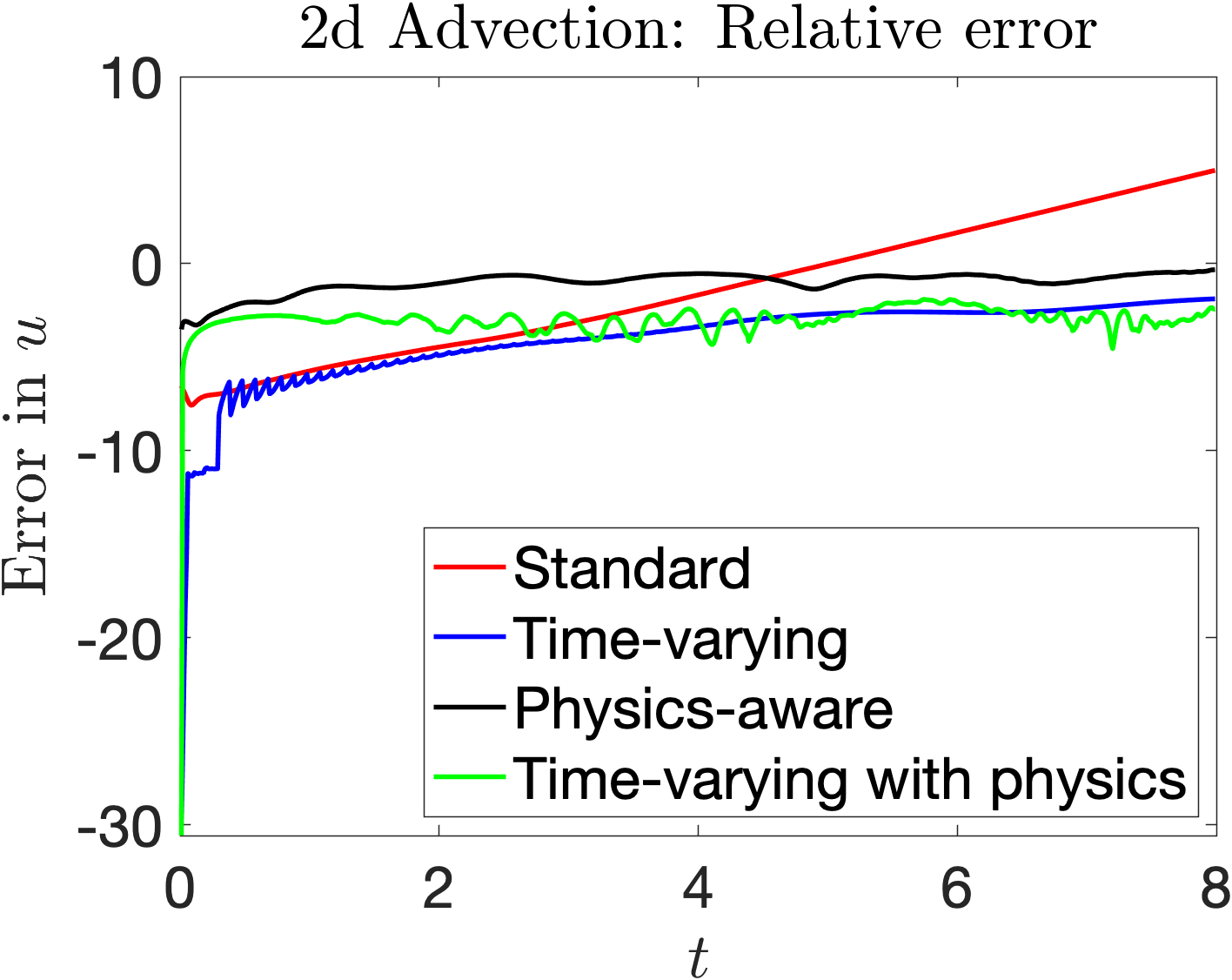}
    \caption{2d time-varying advection-diffusion: log-scale comparison of $L^2$ relative prediction errors.}
    \label{fig:2d-adv-error}
\end{figure}

Similar to the 1-dimensional problem (\ref{eqn:advection1d}), the advection velocity can be estimated in a fully data-driven manner by tracking the mode of the solution snapshots by defining, analogously to (\ref{eqn:define-mean}):
\begin{equation}\label{eqn:define-mean-2d}
    \overline{\mathbf{x}}(t) := \begin{bmatrix} \overline{x}(t) \\ \overline{y}(t)\end{bmatrix} =  \frac{1}{\int_{x_l}^{x_r}\int_{y_b}^{y_t}u(t,x,y)dxdy}\int_{x_l}^{x_r}\int_{y_b}^{y_t}\begin{bmatrix} {x} \\ {y}\end{bmatrix}\cdot u(t, x, y)dydx
\end{equation} and numerically differentiating in time with centered difference. We visualize the predicted solutions for three of the DMD strategies in Figures~\ref{fig:2d-adv-sol1} and \ref{fig:2d-adv-sol2}, corresponding respectively to the standard DMD, physics-aware DMD, and time-varying DMD with Lagrangian moving grid, constructed with a subinterval size $r=30$. We predict the solutions up to $t=8$ and compare with the baseline numerical solution. Finally, the prediction errors (\ref{eqn:define-relative-error}) for all four DMD strategies are presented in Figure~\ref{fig:2d-adv-error}. Due to presence of small diffusion, a time-varying DMD strategy without Lagrangian moving grid is able to achieve comparable accuracy to that with Lagrangian information. The standard DMD shows significant degradation in accuracy over time. The physics-aware DMD and time-varying DMD with physics still possess model misspecification that results in a growth of error over time, albeit at a reduced rate than standard DMD. In contrast, the results given by Algorithm~\ref{alg:time-dependent-lagrangian-dmd} shows controlled error growth, similar to the case observed in (\ref{eqn:1d-advection}).

\newpage
\section{Conclusions}\label{sec:conclusions} In this work, we investigated a method for learning time-dependent advection-dominated phenomena using DMD algorithms. In particular, when the PDE parameters vary in time, we demonstrated that the characteristic lines of the PDE are an important observable to include in order to improve the accuracy of reconstructions, as verified with 1d and 2d advection-diffusion equations with time-varying coefficients. We further provided prediction error guarantee for the time-dependent approximation to the Koopman operator. In addition, we analyzed the effect of SVD truncation and number of data snapshots on operator norm error, and verified such upper bounds in both model-free and model-dependent cases. The method adopted in this work provides a possibility for real-time control in advection-dominated systems.

One of the possible future directions concerns the identification of closures for characterizing the time-evolution of a quantity of interest that depends on the states of another dynamical system \cite{Curtis_2021}. Instead of relying on an equation-free model, deriving and learning explicit forms of the reduced-order dynamics provides a principled analysis tool for uncertainty propagation and control system design, as well as extrapolation capabilities. Furthermore, we briefly investigated the possibility of a full data-driven model by assuming the advection coefficients are unknown and estimated by mode-tracking. Although such a method is effective in capturing the macroscopic behavior, it is far from being sufficient for velocities that have nonlinear dependence in both spatial variables and the solution itself. Future explorations will focus on parameterizations for the advection and diffusion coefficients, which are identified simultaneously as the optimal linear operator is constructed. Such a scenario can be potentially considered in a constrained optimization \cite{Ouala_2023} or Bayesian inversion setting \cite{Kawashima2023}. Reduction of computational complexity is another possible path of future exploration due to the curse of dimensionality for advection-dominated problems associated with moderate- to high-dimensional datasets. An added layer of dimensionality reduction must be adopted in such cases where storing and operating with data snapshots and the Lagrangian moving grid are intractable. A potential solution in the DMD setting is by using low-rank tensor-networks to approximate multidimensional linear operators \cite{Klus_2018, dektor2021rankadaptive}. 

\section*{Acknowledgments} We would like to thank Dr. Hannah Lu and Dr. Tyler Maltba for useful discussions and manuscript review. The research was partially supported by the Air Force Office of Scientific Research under grant FA9550-21-1-0381, by the National Science Foundation under award 2100927, by the Office of Advanced Scientific Computing Research (ASCR) within the Department of Energy Office of Science under award number DE-SC0023163, and by the Strategic Environmental Research and Development Program (SERDP)  of the Department of Defense under award RC22-3278.

\bibliographystyle{siamplain}
\bibliography{localDMD}
\end{document}